\DeclareSymbolFont{newfont}{OML}{cmm}{m}{it}
\DeclareMathSymbol{\Epsilon}{3}{newfont}{15}
\title{Unifilar Machines and the Adjoint Structure of Bayesian Filtering}
\author{Nathaniel Virgo
\institute{Earth-Life Science Institute (ELSI), Tokyo Instutute of Technology}
\email{nathanielvirgo+science@gmail.com}
}
\newcommand{\defFootnote}{\defVal}
\newcommand{\insFootnote}[1]{\footnote{\useVal{#1}}}
\DeclareFontFamily{U}{stix2bb}{\skewchar\font127 }
\DeclareFontShape{U}{stix2bb}{m}{n} {<-> stix2-mathbb}{}
\DeclareMathAlphabet{\mathbb}{U}{stix2bb}{m}{n}
\newtheorem{theorem}{Theorem}[section]
\newtheorem{proposition}[theorem]{Proposition}
\theoremstyle{definition}
\newtheorem{definition}[theorem]{Definition}
\newtheorem{example}[theorem]{Example}
\newenvironment{aligneq}{
    \begin{equation}
    \begin{aligned}
}{
    \end{aligned}
    \end{equation}
    \ignorespacesafterend
}
\newcommand{\cat}[1]{\ensuremath{\mathscr{#1}}}
\newcommand{\onecat}[1]{\ensuremath{\mathbf{#1}}}
\newcommand{\oc}{\onecat}
\newcommand{\opn}{\operatorname}
\newcommand{\comp}{\mathbin{\fatsemi}}
\newcommand{\id}{\mathrm{id}}
\newcommand{\mor}{\xrightarrow}
\newcommand{\blank}{{-}}
\newcommand{\bblank}{{=}}
\newcommand{\R}{\ensuremath{\mathbb{R}}}
\renewcommand{\det}{\text{det}}
\newcommand{\samp}{\opn{samp}}
\newcommand{\cuttext}[1]{}
\newcommand{\squ}{\text{\scalebox{0.6}{$\,\bm\square\,$}}}
\newcommand{\blob}{\bullet}
\begin{document}
\maketitle

\begin{abstract}
We elucidate the mathematical structure of Bayesian filtering, and Bayesian inference more broadly, by applying recent work on category theoretical probability, specifically the concept of a strongly representable Markov category.
We show that filtering, along with related concepts such as conjugate priors, arise from an adjunction: the process of taking a hidden Markov process is right adjoint to a forgetful functor.
This has an interesting consequence.
In practice, filtering is usually implemented using parametrised families of distributions.
The Kalman filter is a particularly important example, which uses Gaussians.
Rather than calculating a new posterior each time, the implementation only needs to udpate the parameters.
This structure arises naturally from our adjunction; the correctness of such a model is witnessed by a map from the model into the system being modelled.
Conjugate priors arise from this construction as a special case.

In showing this we define a notion of \emph{unifilar machine}, which has its origins in the literature on $\Epsilon$-machines.
Unifilar machines are useful as models of the `observable behaviour' of stochastic systems; we show additionally that in the Kleisli category of the distribution monad there is a terminal unifilar machine, and its elements are controlled stochastic processes, mapping sequences of the input alphabet probabilistically to sequences of the output alphabet.
\end{abstract}

\section{Introduction}

This paper is concerned with the mathematical structure of \emph{Bayesian filtering}, which is a common problem in applications of Bayesian inference.
The idea is that there is some system with known dynamics (which in general are stochastic) but an unknown hidden state.
The goal is to keep track of a Bayesian prior over the states of the system, updating it to a posterior whenever a new observation is made.
This is useful if we want to be able to control the hidden state, as in solving a partially observable Markov decision process (POMDP), for example.

To reveal the underlying mathematical structure we make use of recent results in synthetic probability, which allows us to write proofs at the category theoretic level without using measure theory directly.
We work in the framework of Markov categories \cite{fritz2020-markov}, and in particular we make use of the concept of \emph{strongly representable Markov category} as defined in \cite{fritz2020-representable}.
Strongly representable Markov categories include \oc{BorelStoch} (whose objects are standard Borel spaces and whose morphisms are Markov kernels) and the Kleisli category of the (real-valued) distribution monad, which we refer to as \oc{Dist}.
Therefore most of our results apply to both measure-theoretic probability and finitely supported probability.

We model a system with a hidden state as a certain kind of stochastic Moore machine (essentially a hidden Markov model); we refer to this informally as a \emph{dynamical model} of the system.
There is then a functor $B$ that takes such a dynamical model and maps it to an \emph{epistemic model}.
This lives in a different category of machines that we call \emph{unifilar machines}, whose outputs are stochastic but whose state updates are deterministic.
Its state space consists of probability distributions over the hidden states of the system, and its dynamics are given by Bayesian updating.

Our main technical result is \cref{adjunction-result}, which states that this functor is right adjoint to a forgetful functor in the opposite direction.
This has an interesting consequence.
The functor $B$ maps a dynamical model $\kappa$ to what could be called its {universal epistemic model}, $B(\kappa)$.
By this we mean that if we consider another unifilar machine $\alpha$ equipped with a morphism $\alpha\to B(\kappa)$, we can also consider this an epistemic model of $\kappa$, in a sense that we describe next.

In applications, one doesn't necessarily want to keep track of the Bayesian distribution directly.
Instead, one uses a parametrised family of distributions, chosen such that the update step only needs to update the parameters to produce the posterior distribution.
For this to work, the Bayesian posterior must always be in the same family of distributions as the prior.
An example with enormous practical importance is the Kalman filter, \cref{kalman-filter-example}.
Here the prior is a multivariate Gaussian and the posterior is always also a multivariate Gaussian.
The filter's state space consists of the parameters of such a Gaussian, and the update step simply maps them to their new values.
In our framework this kind of structure arises from considering a morphism $\alpha\to B(\kappa)$.
The state space of $B(\kappa)$ consists of probability distributions, and the state space of $\alpha$ consists of values that parametrise them in a consistent way.

This idea is closely related to the notion of conjugate prior, which was previously studied in a category-theoretic context in \cite{jacobs2020-conjugate}.
The definition in that paper is essentially our \cref{jacobs-equation-per-se}, which arises from our framework in a very natural way.
The connection between Bayesian filtering and Bayesian inference is explored in \cref{bayesian-inference-section}, where we also briefly touch on connections to recent work on de Finetti's theorem within a category-theoretic context \cite{jacobs2020-de-finetti,fritz2021-de-finetti}.

A secondary contribution of our paper is an exploration of the possible generalisations of Moore machines to the stochastic case.
Our result involves two different generalisations of Moore machine, which we term \emph{comb machine} (\cref{comb-machine-definition}) and \emph{unifilar machine} (\cref{unifilar-comb-machine-definition}).
Unifilar machines in particular are of independent interest.
They are based on an idea from the literature on $\Epsilon$-machines \cite{barnett2015-epsilon}.
%
They are defined such that their output map is stochastic but their update map is (almost surely) deterministic given their input and their output.
This means that their states map more directly to `behaviours' than the states of a more general stochastic machine.
Indeed, we show in \cref{transducer-section} that in \oc{Dist} the category of unifilar machines has a terminal object,
which consists of the collection of `controlled stochastic processes,' also known as `stochastic streams' \cite{diLavore2022-coinductive}. 
In general, if a category of unifilar machines has a terminal object then it can be seen as an ``object of behaviours'' of stochastic systems.

Our Bayesian filtering machines have a strong resemblance to Bayesian lenses as presented in \cite{smithe2020-bayesian,braithwaite2023-compositional} (see also \cite{kamiya2021-framework}, which develops the idea in a way that makes all of the relevant maps measurable).
Our work is closely related, since the machine $B(\kappa)$ described above has the same data as a Bayesian lens.
However, general unifilar machines do not seem to compose like lenses, and working out the precise relationship is a task for future work.

Our work also seems related to the notion of \emph{determinisation} from automata theory.
Here one takes a nondeterministic automaton (one that may have more than one transition from a given state, but with no notion of probability assigned to them) and turns it into a deterministic automaton whose state space is the power set of the original automaton.
Determinisation has been studied and generalised in a coalgebraic context \cite{silva2013-determinization, adamek2012-coalgebraic};
understanding the exact relation to our work is also a task for future work.

Bayesian filtering and its connection to conjugate priors was previously considered in a Markov category context by the author and colleagues in \cite{virgo2021-interpreting}.
A related approach to Bayesian filtering in Markov categories was presented at the conference \cite{fritz2023-hidden-markov}; the present work was developed independently.
The novel contribution of the present paper is to reveal more of the abstract categorical structure underlying the idea, including the definitions of comb machine, unifilar machine and the adjoint structure involving the functor $B$, as well as the brief discussion of terminal unifilar machines in \cref{transducer-section}.

\subsection{Background on Representable Markov categories}

\interfootnotelinepenalty=10000 

We will use the machinery of representable Markov categories and in particular, strongly representable Markov categories, both defined in \cite{fritz2020-representable}.\insFootnote{definition-only-in-preprint}
For general background on Markov categories we refer to \cite{fritz2020-markov}.
Definitions in this section are from the literature, except for \cref{generalised-almost-surely}, which is a slight generalisation of the usual category-theoretic definition of `almost surely.'

\defFootnote{definition-only-in-preprint}{
The definitions of strongly representable Markov categories and 
`deterministic given $X$' appear in version 2 of the arXiv preprint 
\cite{fritz2020-representable} but not in the published version of the 
same paper (\cite{fritz2023-representable-published}) or version 3 of the 
preprint. %
The author understands that the removal was for reasons of narrative structure rather than any technical defect.
}

Recall from \cite{fritz2020-representable} that given an object $X$ in a Markov category $\cat{C}$, a \emph{distribution object} is an object $PX$ equipped with a map $\opn{samp}_X\colon PX\to X$ such that for every morphism $f\colon A\to X$ there is a unique deterministic morphism $f^\squ \colon A\to PX$ such that $f^\squ \comp \opn{samp_X} = f$.
A Markov category is then called \emph{representable} if every object has a distribution object.
Representable Markov categories often arise as the Kleisli categories of monads obeying conditions spelt out in \cite{fritz2020-representable}.

The two examples we will use are \oc{BorelStoch} (the Kleisli category of the Giry monad, restricted to standard Borel spaces) and the Kleisli category of the (real-valued) distribution monad, which we will call \oc{Dist}.
These are both shown to be representable in \cite{fritz2020-representable}.

We recall also the following results about representable Markov categories:
When every object has a distribution object, $P$ extends to a functor $P\colon \cat{C} \to \cat{C}_\det$.
Restricting the domain of this functor we obtain a functor $P_\det\colon \cat{C}_\det \to \cat{C}_\det$, which will also be written as $P$, except when we wish to explicitly disambiguate. 
The functor $P_\det$ can be made into a monad on $\cat{C}_\text{det}$, and the Kleisli category of this monad is $\cat{C}$.
The unit has components $\delta_X = \id_X^\squ\colon X\to PX$, and the multiplication has components $\mu_X = P(\opn{samp}_X)\colon PPX\to PX$.

This monad arises from an adjunction: the functor $P$ is right adjoint to the inclusion functor $\cat{C}_\det\hookrightarrow \cat{C}$.
Its unit has components $\delta_X$ and its counit has components given by the sampling map $\opn{samp}_X\colon PX\to P$.

\newcommand{\sdstoch}{\sdmorphism}
\newcommand{\sddet}{\sdblack}

In string diagrams we will draw $\opn{samp}_X$ as a white dot.
Additionally, if a morphism is known to be deterministic (\cite{fritz2020-markov}, definition 2.2) we indicate this with a black bar at its right-hand edge, so we can write
\begin{equation}
    \stringdiagram{
        \sdin{inA}{A} \\
        \sdstoch{f}{f}{1}{1} \\
        \sdout{outX}{X} \\
    }{
        \sdconnect{inA}{f}
        \sdconnect{f}{outX}
    }
    =
    \stringdiagram{
        \sdin{inA}{A} \\
        \sddet{f}{f^\squ}{1}{1} \\[1em]
        \sdwhitedot{samp1} \\
        \sdout{outX}{X} \\
    }{
        \sdconnect{inA}{f}
        \sdconnect[PX]{f}{samp1}
        \sdconnect{samp1}{outX}
    }.
\end{equation}

We will need the definition of a strongly representable Markov category.
For this we first recall some more definitions from \cite{fritz2020-markov} and \cite{fritz2020-representable}.

\begin{definition}
[conditionals; \cite{fritz2020-markov}, definition 11.5]
Let $f\colon A\to X\otimes Y$ be a morphism in a Markov category $\cat{C}$.
We say that a morphism $c\colon X\otimes A\to Y$ is a \emph{conditional} of $f$ if
\begin{equation}
    \label{conditional-condition}
    \stringdiagram{
        &[-1mm]\sdin{inA}{A}&[-1mm] \\
        &\sdstoch{f}{f}{1}{2} & \\[1em]
        \sdout{outY}{Y} && \sdout{outX}{X} \\
    }{
        \sdconnect{inA out1}{f in1}
        \sdconnect{f out1}{outY in1}
        \sdconnect{f out2}{outX in1}
    }
    =
    \stringdiagram{
        \sdin{inA}{A} & \\
        \sdblackdot{cp1} & \\
        & \sdstoch{f}{f}{1}{2}  \\
        & \sdinvisn{i1}{2}  \\
        & \sdinvisn{i2}{2}  \\
        \sdstoch[yshift=1mm]{c}{c}{2}{2} & \\
        \sdout{outY}{Y} & \sdout[1mm]{outX}{X} \\
    }{
        \sdblackdotat{i1 out1}
        \sdblackdotat[cp2]{i2 out2}
        \sdconnect{inA out1}{cp1 in1}
        \sdconnect{cp1}{f in1}
        \sdconnect{cp1 out1}{c in1}
        \sdconnect{f out2}{i2 in2}
        \sdconnect{f out1}{i1 in1}
        \sdconnect{cp2}{c in2}
        \sdconnect{cp2 out1}{outX in1}
        \sdconnect{c out1}{outY in1}
    }.
\end{equation}
We say $\cat{C}$ \emph{has conditionals} if every morphism of the appropriate type has a conditional.
\end{definition}
The intuition is that for every value of the parameter $A$, the morphism $f$ defines a joint distribution between $X$ and $Y$, and \cref{conditional-condition} represents a factorisation of this joint distribution into the marginal distribution over $X$ and a  conditional distribution of $Y$ given $X$.
Recall from \cite{fritz2020-markov} that \oc{Dist} and \oc{BorelStoch} both have conditionals, but \oc{Stoch} does not.
Conditionals are in general not unique when they exist; see \cite{fritz2020-markov}, proposition 11.15 and the discussion surrounding it.

The following is essentially definition 13.1 of \cite{fritz2020-markov} (`almost surely'), but we generalise it slightly.
\begin{definition}
    [generalised almost surely]
    \label{generalised-almost-surely}
    Given a morphism $p\colon A\otimes C\to X$ in a Markov category $\cat{C}$, we say morphisms $f,g\colon X\otimes A\to Y$ are \emph{$p$-generalised-almost-surely equal} or \emph{$p$-g.a.s.\ equal} if
    \begin{equation}
    \stringdiagram{
        \sdin{inA}{A} & \sdin{inB}{B} &  \\
        \sdblackdot{cp1} && \\
        && \sdstoch{f}{p}{1}{1}  \\
        &\sdinvis[xshift=-4mm]{i1}& \sdblackdot{cp2} \\[1em]
         \sdstoch[yshift=1mm]{c}{f}{3}{3} && \\
        \sdout[1mm]{outY}{Y} && \sdout{outX}{X} \\
    }{
        \sdconnect{inB out1}{i1 in1}
        \sdconnect{i1 out1}{c in2}
        \sdconnect{inA out1}{cp1 in1}
        \sdconnect{cp1}{f in1}
        \sdconnect{cp1 out1}{c in1}
        \sdconnect{f out1}{cp2 in1}
        \sdconnect{cp2}{c in3}
        \sdconnect{cp2 out1}{outX in1}
        \sdconnect{c out2}{outY in1}
    }
    =
    \stringdiagram{
        \sdin{inA}{A} & \sdin{inB}{B} &  \\
        \sdblackdot{cp1} && \\
        && \sdstoch{f}{p}{1}{1}  \\
        &\sdinvis[xshift=-4mm]{i1}& \sdblackdot{cp2} \\[1em]
         \sdstoch[yshift=1mm]{c}{g}{3}{3} && \\
        \sdout[1mm]{outY}{Y} && \sdout{outX}{X} \\
    }{
        \sdconnect{inB out1}{i1 in1}
        \sdconnect{i1 out1}{c in2}
        \sdconnect{inA out1}{cp1 in1}
        \sdconnect{cp1}{f in1}
        \sdconnect{cp1 out1}{c in1}
        \sdconnect{f out1}{cp2 in1}
        \sdconnect{cp2}{c in3}
        \sdconnect{cp2 out1}{outX in1}
        \sdconnect{c out2}{outY in1}
    }.
    \end{equation}
\end{definition}
The idea is that in a measure-theoretic context such as \oc{Stoch} or \oc{BorelStoch}, for a values $A$, the values of $f$ and $g$ can differ only on subsets of $X$ that have measure zero according to $p$.
In \oc{Dist} it means that $f(y\mid x,b,a) = g(y\mid x,b,a)$ whenever $p(x\mid a)>0$.

The category-theoretic definition of almost-surely in \cite{fritz2020-markov} allows $p$ but not $f$ or $g$ to depend on a parameter, so this is a slight generalisation.
We avoid calling our version ``almost surely'' to avoid confusion with the usual definition, as used in \cite{fritz2020-markov}, \cite{fritz2020-representable} and other works.

\newcommand{\sdhalfdet}{\sdhalfblack}

\newcommand{\inlineMarginal}[1]{%
$
\stringdiagram{
    \sdinvis{i1} \\
    \sdstoch{f}{#1}{1}{2} \\[-1.5mm]
    \sdinvisn{i2}{2} \\[-1.5mm]
    \sdinvisn{i3}{2} \\
    } {
        \sdblackdotat{i2 in1}
        \sdconnect{i1}{f in1}
        \sdconnect{f out1}{i2 in1}
        \sdconnect{f out2}{i3 in2}
    }
$%
}

\begin{definition}
[deterministic given $X$; \cite{fritz2020-representable}, definition 6.4]
Let $f\colon A\to X\otimes Y$ be a morphism in a Markov category \cat{C}, and let $f$ be such that a conditional $c\colon X\otimes A\to Y$ exists.
The morphism $f$ is said to be \emph{deterministic given~$X$} if the conditional is \inlineMarginal{f}-generalised-almost-surely deterministic, in the sense that
\begin{equation}
    \label{deterministic-given-x-condition}
    \stringdiagram{
        &[-1mm] \sdin{inA}{A} &[-1mm]&[-1mm] \\
        & \sdblackdot{cp1} && \\
        &&& \sdstoch{f}{f}{1}{2}  \\
        &&& \sdinvisn{i1}{2}  \\
        &&& \sdinvisn{i2}{2}  \\
        & \sdstoch[yshift=1mm]{c}{c}{2}{1} & \\
        & \sdblackdot[yshift=1mm]{cp3} & \\
        \sdout{outY1}{Y} && \sdout{outY2}{Y} & \sdout[1mm]{outX}{X} \\
    }{
        \sdblackdotat{i1 out1}
        \sdblackdotat[cp2]{i2 out2}
        \sdconnect{inA out1}{cp1 in1}
        \sdconnect{cp1}{f in1}
        \sdconnect{cp1 out1}{c in1}
        \sdconnect{f out2}{i2 in2}
        \sdconnect{f out1}{i1 in1}
        \sdconnect{cp2}{c in2}
        \sdconnect{cp2 out1}{outX in1}
        \sdconnect{c out1}{cp3 in1}
        \sdconnect{cp3}{outY1 in1}
        \sdconnect{cp3}{outY2 in1}
    }
    =
    \stringdiagram{
        &[-1ex] \sdin{inA}{A} &[-1ex]&[-1mm] \\
        & \sdblackdot{cp1} && \\
        &&& \sdstoch{f}{f}{1}{2}  \\
        &&& \sdinvisn{i1}{2}  \\
        &&& \sdinvisn{i2}{2}  \\[1em]
        & \sdinvisn[yshift=1mm]{i3}{2} && \\[1em]
        \sdstoch{c1}{c}{2}{1} && \sdstoch{c2}{c}{2}{1} & \\
        \sdout{outY1}{Y} && \sdout{outY2}{Y} & \sdout[1mm]{outX}{X} \\
    }{
        \sdblackdotat{i1 out1}
        \sdblackdotat[cp2]{i2 out2}
        \sdblackdotat[cp4]{i3 out1}
        \sdblackdotat[cp5]{i3 out2}
        \sdconnect{inA out1}{cp1 in1}
        \sdconnect{cp1}{f in1}
        \sdconnect{cp1 out1}{cp4 in0}
        \sdconnect{f out2}{i2 in2}
        \sdconnect{f out1}{i1 in1}
        \sdconnect{cp2}{cp5 in1}
        \sdconnect{cp2 out1}{outX in1}
        \sdconnect{c1 out1}{outY1 in1}
        \sdconnect{c2 out1}{outY2 in1}
        \sdconnect{cp4}{c1 in1}
        \sdconnect{cp5}{c1 in2}
        \sdconnect{cp4}{c2 in1}
        \sdconnect{cp5}{c2 in2}
    }.
\end{equation}
If $f\colon A\to X\otimes Y$ is known to be deterministic given $X$ we write it as $
\stringdiagram{
    \sdinvis{i1} \\
    \sdhalfdet{f}{f}{1}{2} \\
    \sdinvisn{i2}{2} \\
    } {
        \sdconnect{i1}{f in1}
        \sdconnect{f out1}{i2 in1}
        \sdconnect{f out2}{i2 in2}
    }
$.
\end{definition}

In \cite{fritz2020-representable} it is shown that if \cref{deterministic-given-x-condition} holds for one conditional of $f$ then it holds for all conditionals, so that this definition is independent of the choice of conditional $c$.

For both \oc{BorelStoch} and \oc{Dist}, if \cref{deterministic-given-x-condition} holds then $c$ is \inlineMarginal{f}-generalised-almost-surely equal to a deterministic morphism (\cite{fritz2020-representable}, example 6.12), so for most purposes it will not hurt to think of such conditionals as genuinely deterministic, though only defined up to \inlineMarginal{f}-g.a.s.\ equivalence.

\renewcommand{\diamond}{{\,\diamondsuit}}

\begin{definition}
    \label{strongly-representable-definition}
    [Strongly representable Markov category; \cite{fritz2020-representable}, definition 6.7]
    A strongly representable Markov category is a representable Markov category in which for every morphism $f\colon A\to X\otimes Y$ there is a unique morphism $f^\diamond\colon A\to X\otimes PY$ such that (i) $f^\diamond$ is deterministic given $X$, and (ii)
    \begin{equation}
    \label{strongly-representable-condition}
    \stringdiagram{
        &[-1mm]\sdin{inA}{A}&[-1mm] \\
        &\sdstoch{f}{f}{1}{2} & \\[1em]
        \sdout{outY}{Y} && \sdout{outX}{X} \\
    }{
        \sdconnect{inA out1}{f in1}
        \sdconnect{f out1}{outY in1}
        \sdconnect{f out2}{outX in1}
    }
    =
    \stringdiagram{
        &[-1mm]\sdin{inA}{A}&[-1mm] \\
        &\sdhalfdet{f}{f^\diamond}{1}{2} & \\[1em]
        \sdwhitedot{samp} && \sdinvis{i1} \\
        \sdinvis{i3} && \sdinvis{i2} \\
        \sdout{outY}{Y} && \sdout{outX}{X} \\
    }{
        \sdconnect{inA out1}{f in1}
        \sdconnect{f out2}{i1 in1}
        \sdconnect[\,\,\,PY]{f out1}{samp in1}
        \sdconnect{i1 out1}{i2 in1}
        \sdconnect{samp out1}{i3 in1}
        \sdconnect{i2 out1}{outX in1}
        \sdconnect{i3 out1}{outY in1}
    }    .
    \end{equation}
\end{definition}
(This definition is less efficient than the one given in \cite{fritz2020-representable}, which doesn't include an assumption that the category is representable, since this can be proven from weaker assumptions.)
A strongly representable Markov category necessarily has conditionals, because $f^\diamond$ has a conditional by the definition of deterministic given $X$, and if $c\colon X\times A\to PY$ is such a conditional then $c\comp \opn{samp}_Y$ is a conditional of~$f$.

\oc{BorelStoch} is shown to be strongly representable in example 6.12 of \cite{fritz2020-representable}.
For completeness we provide a proof that \oc{Dist} is strongly representable in \cref{dist-is-strongly-representable}, where we also give an explicit construction for $f^\diamond$ in \oc{Dist}.

As a consequence of \cref{strongly-representable-definition}, in a strongly representable Markov category, if we have morphisms $f,g\colon A\to X\otimes PY$ that are known to be deterministic given $X$ then
\begin{equation}
    \label{samp-can-be-cancelled}
    \stringdiagram{
        \sdin[1mm]{inA}{A} \\
        \sdhalfdet[yshift=1mm]{f}{f}{1}{2} \\[1em]
        \sdwhitedot{samp} & \sdinvis{i1} \\
        \sdout{outY}{Y} & \sdout{outX}{X} \\
    }{
        \sdconnect{inA out1}{f in1}
        \sdconnectlabel{PY}{below=1mm}{f out1}{samp in1}
        \sdconnect{samp out1}{outY in1}
        \sdconnect{f out2}{i1 in1}
        \sdconnect{i1 out1}{outX in1}
    }
    =
    \stringdiagram{
        \sdin[1mm]{inA}{A} \\
        \sdhalfdet[yshift=1mm]{f}{g}{1}{2} \\[1em]
        \sdwhitedot{samp} & \sdinvis{i1} \\
        \sdout{outY}{Y} & \sdout{outX}{X} \\
    }{
        \sdconnect{inA out1}{f in1}
        \sdconnectlabel{PY}{below=1mm}{f out1}{samp in1}
        \sdconnect{samp out1}{outY in1}
        \sdconnect{f out2}{i1 in1}
        \sdconnect{i1 out1}{outX in1}
    }
    \qquad\Longrightarrow\qquad 
    \stringdiagram{
        \sdin[1mm]{inA}{A} \\
        \sdhalfdet[yshift=1mm]{f}{f}{1}{2} \\
        \sdinout{outY}{PY} & \sdinout{outX}{X} \\
    }{
        \sdconnect{inA out1}{f in1}
        \sdconnect{f out1}{outY in1}
        \sdconnect{f out2}{outX in1}
    }
    =
    \stringdiagram{
        \sdin[1mm]{inA}{A} \\
        \sdhalfdet[yshift=1mm]{f}{g}{1}{2} \\
        \sdinout{outY}{PY} & \sdinout{outX}{X} \\
    }{
        \sdconnect{inA out1}{f in1}
        \sdconnect{f out1}{outY in1}
        \sdconnect{f out2}{outX in1}
    }.
\end{equation}
This will be used in the proofs of \cref{b-is-a-functor,adjunction-result}.
The condition that $f$ and $g$ are deterministic given $X$ is needed to cancel sampling maps in this way, since sampling maps are not usually epimorphisms.

\section{Machines and Bayesian Filtering}
\label{main-section}

The following definitions are all relative to a Markov category $(\cat{C},\otimes,1)$ and a choice of objects called the \emph{input space} $I$ and \emph{output space} $O$, which we will assume to be fixed throughout this section.

For most of the following we will work with what we call ``comb machines,'' which are a generalisation of Moore machines.
However, many of the results also carry over to the case of Mealy machines, which we define first because they are simpler.
The following definition is standard:

\begin{definition}
    [Stochastic Mealy machine]
    A \emph{stochastic Mealy machine} is an object $S$ of \cat{C} called the \emph{state space}, together with a morphism $\alpha\colon I\otimes S\to O\otimes S$ in $\cat{C}$.
    A morphism of Mealy machines $(S,\alpha)\to(T,\beta)$ is a morphism $f\colon S\to T$ in \cat{C} such that $I\otimes S \mor{\alpha} O\otimes S \mor{\id_O\otimes f} O\otimes T = I\otimes S\mor{\id_I\otimes f}I\otimes T \mor{\beta} O\otimes T$. 
    The category of Mealy machines will be written $\oc{Mealy}(I,O)$.
\end{definition}

The idea is that a Mealy machine starts in some state in $S$, receives an input in $I$, and then produces an output in $O$ while simultaneously transitioning to a new state.
The output may depend on the input and may be correlated with the new state.
We don't require morphisms of Mealy machines to be deterministic.

We now briefly discuss Moore machines and their generalisation to the stochastic context.
In a Cartesian category, a Moore machine consists of a state space $S$ and two maps: a \emph{readout map} $S\to O$ and an \emph{update map} $I\times S \to S$.
An obvious way to generalise this to the stochastic case is to let both maps be stochastic, so that the update map has type $I\otimes S\to S$.
However, machines with this definition tend not to be very well behaved, and in practice other definitions tend to be used.

One way to make stochastic Moore machines well behaved is to make the readout map deterministic.
Machines of this kind can be expressed in terms of generalised lenses \cite{spivak2019-lenses}; this is the approach taken in \cite{myers-draft-systems}, for example.
Intuitively, requiring a deterministic readout map allows the update map to ``know'' what the machine's last output was, since this can be inferred from the current value of $S$.
However, for the present work we need the readout map to be stochastic, so we take a different approach:

\begin{definition}
    [Comb machine]
    \label{comb-machine-definition}
    A \emph{comb machine} in a Markov category \cat{C} is an object $S$ of \cat{C} (the {state space}), together with a morphism $\alpha\colon I\otimes S\to O\otimes S$ in $\cat{C}$ and a morphism $\alpha^\blob \colon S\to O$ such that
    \begin{equation}
        \label{comb-condition}
       \stringdiagram{
            \sdin[-1mm]{inS}{S} & \sdin{inI}{I} \\
            \sdstoch{a}{\alpha}{2}{2} & \\
            \sdinvisn{i1}{2} & \sdinvis{i2} \\
             & \sdout{outO}{O} \\
        }{
            \sdblackdotat{i1 in1}
            \sdconnect{inS out1}{a in1}
            \sdconnect{inI out1}{a in2}
            \sdconnect{a out1}{i1 in1}
            \sdconnect{a out2}{outO in1}
        }
        = 
    \stringdiagram{
        \sdin{inS}{S} & \sdin{inI}{I} \\
        \sdstoch{a1}{\alpha^\blob}{1}{1} & \sdblackdot{i1} \\[1em]
         & \sdout{outO}{O} \\        
    }{
            \sdconnect{inS out1}{a1 in1}
            \sdconnect{inI out1}{i1}
            \sdconnect{a1 out1}{outO in1}
    }.
    \end{equation}
    A morphism of comb machines $(S,\alpha)\to(T,\beta)$ is a morphism $f\colon S\to T$ in \cat{C} 
    that commutes with $\alpha$ and $\beta$ in the same way as for a Mealy machine.
    The category of comb machines will be written $\oc{CombMachine}(I,O)$.
\end{definition}

Comb machines take their name from comb elements, as defined in a probability context in \cite{jacobs2019-surgery}.
We avoid calling them Moore machines in order to avoid confusion with the more usual definition.

\Cref{comb-condition} expresses the idea that the output of a comb machine cannot directly depend on the input.
Consequently a comb machine $\alpha$ could be seen as a Mealy machine that obeys an extra condition, namely the existence of $\alpha^\blob$ such that \cref{comb-condition} holds.
However, we will often think of them differently.
If \cat{C} has conditionals then a comb machine $\alpha$ can always be factored as
\begin{equation}
        \label{causal-conditional}
        \stringdiagram{
            \sdin[-1mm]{inS}{S} & \sdin{inI}{I} \\
            \sdstoch{u}{\alpha}{2}{2} & \\
            \sdout[-1mm]{outS}{S} & \sdout{outO}{O} \\
        }{
            \sdconnect{inS out1}{u in1}
            \sdconnect{inI out1}{u in2}
            \sdconnect{u out1}{outS in1}
            \sdconnect{u out2}{outO in1}
        }
        =
        \stringdiagram{
            \sdin{inA}{I\otimes S} & \\
            \sdblackdot{cp1} & \\
            & \sdstoch{f}{\alpha}{1}{2}  \\
            & \sdinvisn{i1}{2}  \\
            & \sdinvisn{i2}{2}  \\
            \sdstoch[yshift=1mm]{c}{u}{2}{2} & \\
            \sdout{outY}{S} & \sdout[1mm]{outX}{O} \\
        }{
            \sdblackdotat{i1 out1}
            \sdblackdotat[cp2]{i2 out2}
            \sdconnect{inA out1}{cp1 in1}
            \sdconnect{cp1}{f in1}
            \sdconnect{cp1 out1}{c in1}
            \sdconnect{f out2}{i2 in2}
            \sdconnect{f out1}{i1 in1}
            \sdconnect{cp2}{c in2}
            \sdconnect{cp2 out1}{outX in1}
            \sdconnect{c out1}{outY in1}
        }
        =
        \stringdiagram{
            \sdin{inS}{S} & \sdin{inI}{I} & \\
            \sdblackdot{cpy1} && \\
            && \sdstoch{u}{\alpha^\blob}{1}{1}  \\
            &  \sdinvis[xshift=-4mm]{i1} & \sdblackdot{cpy2} \\[1ex]
            \sdstoch[yshift=1mm]{c}{u}{3}{3} && \\
            \sdout{outS}{S} && \sdout{outO}{O} \\
        }{
            \sdconnect{inS out1}{cpy1 in1}
            \sdconnect{cpy1}{u in1}
            \sdconnect{cpy1 out1}{c in1}
            \sdconnect{u out1}{cpy2 in1}
            \sdconnect{cpy2}{c in3}
            \sdconnect{inI out1}{i1 in1}
            \sdconnect{i1 out1}{c in2}
            \sdconnect{c out1}{outS in1}
            \sdconnect{cpy2 out1}{outO in1}
        },
\end{equation}
where $u$ is a conditional of $\alpha$ as shown. 
We refer to $\alpha^\blob$ as the \emph{readout map} and $u$ as an \emph{update map} of the comb machine $(S,\alpha)$, analogously to the maps that define a Moore machine.
If $f\colon (S,\alpha)\to(T,\beta)$ is a morphism of comb machines, then we have $f\comp \alpha^\blob = \beta^\blob$, which we show in \cref{morphisms-of-comb-machines}.

The readout map $\alpha^\blob$ has the same type as in a Moore machine, $S\to O$, but the update map has type $O\otimes I\otimes S\to S$ and is only defined up to $\alpha^\blob$-g.a.s.\ equality.
This allows the next state and the output to be correlated for a given previous state and input, while still requiring the output to be independent of the input.
Although update maps are not uniquely defined, their behaviour can only differ on measure zero subsets of the output space.
In \oc{Dist} this means their behaviour can differ only on outputs $o\in O$ that cannot occur at all in a given state, i.e.\ for which $\alpha^\blob(o\mid s) = 0$.

We think of comb machines as giving their output first and then receiving their input, in contrast to Mealy machines, which first receive an input and then give an output.%
\footnote{%
This raises the question of whether we can interpose some other morphism in between $\alpha^\blob$ and $u$, so that the machine receives an input that can depend on its output, and perhaps also on the outputs of other machines.
Answering this in the most general case is rather involved and we will not address it in this paper.
However, in the case where $\cat{C}$ is \oc{FinStoch}, \cite{jacobs2019-surgery} provides a way to compose 2-combs, of which comb machines are a special case.
}
The picture to have in mind for a comb machine is this:
\begin{equation}
        \stringdiagram{
            \sdin[-1mm]{inS}{S} & \sdin{inI}{I} \\
            \sdstoch{u}{\alpha}{2}{2} & \\
            \sdout[-1mm]{outS}{S} & \sdout{outO}{O} \\
        }{
            \sdconnect{inS out1}{u in1}
            \sdconnect{inI out1}{u in2}
            \sdconnect{u out1}{outS in1}
            \sdconnect{u out2}{outO in1}
        }
    =
    \stringdiagram{
        \sdin[-1mm]{inS}{S} & \sdin{inI}{I} \\
        \sdstoch{a1}{\alpha}{2}{2} & \sdinvis{i1} \\[1em]
        \sdstoch{a2}{}{2}{2} & \sdinvis{i2} \\
        \sdout[-1mm]{outS}{S} & \sdout{outO}{O} \\        
    }{
            \sdconnect{inS out1}{a1 in1}
            \sdconnect{a1 out1}{outS in1}
            \sdconnect{inI out1}{i1}
            \sdconnect{a1 out2}{i2 in1}
            \sdconnect{i1 out1}{a2 in2}
            \sdconnect{i2 out1}{outO in1}
            \sdcomb{a1}{a2}
    },
    \qquad
    \stringdiagram{
        \sdin[-1mm]{inS}{S} & \sdin{inI}{I} \\
        \sdstoch{a1}{\alpha}{2}{2} & \sdinvis{i1} \\[1em]
        \sdstoch{a2}{}{2}{2} & \sdinvis{i2} \\
        \sdblackdot{outS} & \sdout{outO}{O} \\        
    }{
            \sdconnect{inS out1}{a1 in1}
            \sdconnect{a1 out1}{outS in1}
            \sdconnect{inI out1}{i1}
            \sdconnect{a1 out2}{i2 in1}
            \sdconnect{i1 out1}{a2 in2}
            \sdconnect{i2 out1}{outO in1}
            \sdcomb{a1}{a2}
    }
    =
    \stringdiagram{
        \sdin{inS}{S} & \sdin{inI}{I} \\
        \sdstoch{a1}{\alpha^\blob}{1}{1} & \sdblackdot{i1} \\[1em]
         & \sdout{outO}{O} \\        
    }{
            \sdconnect{inS out1}{a1 in1}
            \sdconnect{inI out1}{i1}
            \sdconnect{a1 out1}{outO in1}
    }.
\end{equation}

We now introduce the concept of a \emph{unifilar} machine.
A unifilar machine has a stochastic readout map but a deterministic update map.
(Or at least, a generalised-almost-surely deterministic one.)
The term ``unifilar'' comes from the literature on computational mechanics, where it can be used to define $\Epsilon$-machines \cite{travers2011-equivalence}.
In particular it appears in a machine-like context in \cite{barnett2015-epsilon}, proposition 5.
The formal context is different, in part because we don't assume stationarity or irreducibility, but our definition achieves the same idea.
We define unifilar machines in Mealy machine and comb machine flavours:

\defFootnote{epsilon-machines}{The concept of unifilarity appears in the definition of $\Epsilon$-machines: an $\Epsilon$-machine can be defined as the minimal unifilar representation of a stationary process \cite{travers2011-equivalence}.
We do not explore stationary processes or minimality in the present work, however.}

\begin{definition}
    [unifilar Mealy machine]
    \label{unifilar-mealy-machine-definition}
    A \emph{unifilar Mealy machine} in a Markov category is a Mealy machine $(S,\alpha)$ with the condition that $\alpha$ is deterministic given $O$.
    Additionally, we require morphisms of unifilar mealy machines to be deterministic.
    The category of unifilar Mealy machines will be written as $\oc{UnifilarMealy}(I,O)$.
\end{definition}

\begin{definition}
    [unifilar comb machine]
    \label{unifilar-comb-machine-definition}
    A \emph{unifilar comb machine} in a Markov category is a comb machine $(S,\alpha,\alpha^\blob)$ with the condition that $\alpha$ is deterministic given $O$.
    As with unifilar Mealy machines, we require morphisms of unifilar comb machines to be deterministic.
    The category of unifilar comb machines will be written as $\oc{UnifilarComb}(I,O)$.
\end{definition}

Note that $\alpha$ must admit a conditional $O\otimes I\otimes S\to S$ in order to satisfy either definition.

When we say ``unifilar machine'' without qualification we mean a unifilar comb machine.

The idea of a unifilar machine (of either type) is that all of the randomness comes from the choice of output.
A unifilar comb machine factors according to \cref{causal-conditional}, with the additional feature that the conditional $u$ is $\alpha^\blob$-generalised-almost-surely deterministic.
We interpret this as follows:
first the output $O$ is chosen stochastically (via $\alpha^\blob\colon S\to O$), and then the state updates $\alpha^\blob$-g.a.s.\ deterministically as a function of the output and the input.
As for comb machines in general, the behaviour of an update map is uniquely specified on all but $\alpha^\blob$-measure-zero subsets of the output space.

If \cat{C} is Cartesian then Mealy machines and unifilar Mealy machines coincide, as do comb machines and unifilar comb machines, both of which coincide with Moore machines.
So both comb machines and unifilar comb machines can claim to be a generalisation of Moore machines to the stochastic case.

It is worth saying something about the meaning of morphisms in these categories.
The following can be made formal using the machinery we introduce in \cref{transducer-section}, but for now we state it  informally.
We can think of a non-unifilar machine (of either flavour) as inducing a stochastic map from infinite sequences of inputs to infinite sequences of outputs, subject to a \emph{causality condition} that each output can only depend on inputs that were received at earlier points in time.
(Recall that for Mealy machines we consider the input to be received before the output, and vice versa for comb machines.)
We refer to this map as the machine's behaviour.
For Mealy machines and comb machines, a morphism $(S,\alpha)\to(T,\beta)$ witnesses that $\beta$ is capable of exhibiting all of the externally observable behaviours that $\alpha$ can exhibit.
Using a stochastic map makes sense because the states are unobserved and change randomly; we consider distributions over states to exhibit behaviours, as well as states themselves.

The interpretation of morphisms between unifilar machines is similar, but we require the morphisms to be deterministic.
%
A morphism of unifilar machines witnesses not only that their externally observable behaviour is the same, but also that there is a mapping between their internal states that preserves this behaviour.
This makes sense conceptually because we will generally consider the state of a unifilar machine to be observable.

Our first result concerns the existence of an adjunction between the categories $\oc{CombMachine}(I,O)$ and $\oc{UnifilarComb}(I,O)$, from which Bayesian filtering arises.
A similar result holds for $\oc{Mealy}(I,O)$ and $\oc{UnifilarMealy}(I,O)$, which we will state at the end.
Its proof is largely the same.

We first note that there is a forgetful functor $F\colon \oc{UnifilarComb}(I,O)\to\oc{CombMachine}(I,O)$ that embeds unifilar comb machines into comb machines.
On objects it forgets that the machine obeys the deterministic-given-$O$ condition, and it also forgets that morphisms are deterministic.

If \cat{C} is strongly representable we can construct a functor in the opposite direction.
We first define it and then prove that it lands in $\oc{UnifilarComb}(I,O)$ and is a functor.
\begin{definition}
    \label{b-definition}
    Suppose that \cat{C} is a strongly representable Markov category.
    Then we define a putative functor $B\colon \oc{CombMachine}(I,O)\to \oc{UnifilarComb}(I,O)$.
    On objects it maps $(S,\alpha)\mapsto (PS, B\alpha)$, where $B\alpha = (\id_I\otimes\opn{samp_S}\comp\alpha)^{\!\diamond}$ is the unique morphism such that $B\alpha$ is deterministic given $O$ and
    \begin{equation}
        \label{action-of-b-on-objects}
        \stringdiagram{
            \sdin[-1mm]{inT}{PS} & \sdin{inI}{I} \\
            \sdinvisn{i1}{2} & \sdinvis{i2}\\[1em]
            \sdstoch{u}{\alpha}{2}{2} & \\[1em]
            \sdout[-1mm]{outT}{S} & \sdout{outO}{O} \\
        }{
            \sdwhitedotat{i1 out1}
            \sdconnect{inT out1}{i1 in1}
            \sdconnect{inI out1}{i2 in1}
            \sdconnect{i2 out1}{u in2}
            \sdconnect{u out1}{outT in1}
            \sdconnect{u out2}{outO in1}
            \sdconnectlabel{S}{below=1pt}{i1 out1}{u in1}
        }
        =        
        \stringdiagram{
            \sdin[-1mm]{inT}{PS} & \sdin{inI}{I} \\[1em]
            \sdhalfdet{u}{B\alpha}{2}{2} & \\[1em]
            \sdinvisn{i1}{2} & \sdinvis{i2}\\
            \sdout[-1mm]{outT}{S} & \sdout{outO}{O} \\
        }{
            \sdwhitedotat{i1 out1}
            \sdconnect{inT out1}{u in1}
            \sdconnect{inI out1}{u in2}
            \sdconnect{i1 out1}{outT in1}
            \sdconnect{u out2}{i2 in1}
            \sdconnect{i2 out1}{outO in1}
            \sdconnectlabel{PS}{below=1pt}{u out1}{i1 in1}
        }.
    \end{equation}
    On morphisms, $B$ maps a morphism of comb machines with underlying map $f\colon S\to T$ to a morphism of unifilar machines with underlying deterministic map $Pf\colon PS\to PT.$
\end{definition}

\begin{proposition}
    \label{b-is-a-functor}
    Let \cat{C} be a strongly representable Markov category. Then \cref{b-definition} yields a functor $B\colon \oc{CombMachine}(I,O)\to \oc{UnifilarComb}(I,O)$.
\end{proposition}

\begin{proof}
The mapping respects composition and identities by functoriality of $P$, but to prove $B$ is a functor we have to show $(i)$ that $B\alpha$ is indeed a unifilar comb machine, and $(ii)$ that that $Pf$ is indeed a morphism of unifilar comb machines.
For $(i)$ we show that if \cref{comb-condition} holds for $\alpha$ then it holds for $B\alpha$:
\begin{equation}
    \label{epistemic-readout}
      \stringdiagram{
            \sdin[-1mm]{inS}{PS} & \sdin{inI}{I} \\
            \sdhalfdet{a}{B\alpha}{2}{2} & \\[1em]
            \sdinvisn{i1}{2} & \sdinvis{i2} \\
             & \sdout{outO}{O} \\
        }{
            \sdblackdotat{i1 in1}
            \sdconnect{inS out1}{a in1}
            \sdconnect{inI out1}{a in2}
            \sdconnectlabel{PS}{below=2pt}{a out1}{i1 in1}
            \sdconnect{a out2}{i2 in1}
            \sdconnect{i2 out1}{outO in1}
        }
        =
        \stringdiagram{
            \sdin[-1mm]{inT}{PS} & \sdin{inI}{I} \\
            \sdinvisn{i1}{2} & \sdinvis{i2}\\[1em]
            \sdstoch{u}{\alpha}{2}{2} & \\
            \sdblackdot[yshift=-1mm]{outT} & \\
             & \sdout{outO}{O} \\
        }{
            \sdwhitedotat{i1 out1}
            \sdconnect{inT out1}{i1 in1}
            \sdconnect{inI out1}{i2 in1}
            \sdconnect{i2 out1}{u in2}
            \sdconnect{u out1}{outT in1}
            \sdconnect{u out2}{outO in1}
            \sdconnectlabel{S}{below=1pt}{i1 out1}{u in1}
        }
        = 
    \stringdiagram{
        \sdin{inS}{PS} & \sdin{inI}{I} \\
        \sdwhitedot{w1} & \sdblackdot{i1} \\
        \sdstoch{a1}{\alpha^\blob}{1}{1} & \\[1ex]
         & \sdout{outO}{O} \\        
    }{
            \sdconnect{inS out1}{a1 in1}
            \sdconnect{inI out1}{i1}
            \sdconnect{a1 out1}{outO in1}
    }
        = 
    \stringdiagram{
        \sdin{inS}{PS} & \sdin{inI}{I} \\
        \sddet{a1}{P\alpha^\blob}{1}{1} & \sdblackdot{i1} \\
        \sdwhitedot{w1} & \\
        &\sdout{outO}{O} \\        
    }{
            \sdconnect{inS out1}{a1 in1}
            \sdconnect{inI out1}{i1}
            \sdconnect{a1 out0}{w1 in1}
            \sdconnect{w1 out1}{outO in1}
    }.
\end{equation}
For $(ii)$, since $f$ is a morphism of comb machines we have
    \begin{aligneq}
        \stringdiagram{
            \sdin{inPS}{PS} & \sdin{inI}{I} \\
            \sdwhitedot{samp} & \sdinvis{i1}\\
            \sdstoch[yshift=1mm]{a}{\alpha}{2}{2} & \\
            \sdstoch{f}{f}{1}{1} & \sdinvis{i2} \\
            \sdout{outT}{T} & \sdout{outO}{O} \\
        } {
            \sdconnect{inPS out1}{samp in1}
            \sdconnect{samp out1}{a in1}
            \sdconnect{a out1}{f in1}
            \sdconnect{f out1}{outT in1}
            \sdconnect{inI out1}{i1 in1}
            \sdconnect{i1 out1}{a in2}
            \sdconnect{a out2}{i2 in1}
            \sdconnect{i2 out1}{outO in1}
        }
        &=
        \stringdiagram{
            \sdin{inPS}{PS} & \sdin{inI}{I} \\
            \sdwhitedot{samp} & \sdinvis{i1}\\
            \sdstoch{f}{f}{1}{1} & \sdinvis{i2} \\
            \sdstoch[yshift=1mm]{a}{\beta}{2}{2} & \\
            \sdout{outT}{T} & \sdout{outO}{O} \\
        } {
            \sdconnect{inPS out1}{samp in1}
            \sdconnect{samp out1}{f in1}
            \sdconnect{f out1}{a in1}
            \sdconnect{a out1}{outT in1}
            \sdconnect{inI out1}{i2 in1}
            \sdconnect{i2 out1}{a in2}
            \sdconnect{a out2}{outO in1}
        }
        \\
        \stringdiagram{
            \sdin{inPS}{PS} & \sdin{inI}{I} \\
            \sdhalfdet[yshift=1mm]{a}{B(\alpha)}{2}{2} & \\
            \sdwhitedot{samp1} & \sdinvis{i1}\\
            \sdstoch{f}{f}{1}{1} & \sdinvis{i2} \\
            \sdout{outT}{T} & \sdout{outO}{O} \\
        } {
            \sdconnect{inPS out1}{a in1}
            \sdconnect{a out1}{samp1 in1}
            \sdconnect{samp1 out1}{f in1}
            \sdconnect{f out1}{outT in1}
            \sdconnect{inI out1}{a in2}
            \sdconnect{a out2}{i1 in1}
            \sdconnect{i1 out1}{outO in1}
        }
        &=
        \stringdiagram{
            \sdin{inPS}{PS} & \sdin{inI}{I} \\
            \sddet{f}{Pf}{1}{1} & \sdinvis{i1} \\
            \sdwhitedot{samp1} & \sdinvis{i2}\\
            \sdstoch[yshift=1mm]{a}{\beta}{2}{2} & \\
            \sdout{outT}{T} & \sdout{outO}{O} \\
        } {
            \sdconnect{inPS out1}{f in1}
            \sdconnect{f out1}{samp1 in1}
            \sdconnect{samp1 out1}{a in1}
            \sdconnect{a out1}{outT in1}
            \sdconnect{inI out1}{i2 in1}
            \sdconnect{i2 out1}{a in2}
            \sdconnect{a out2}{outO in1}
        }
        \\
        \stringdiagram{
            \sdin{inPS}{PS} & \sdin{inI}{I} \\
            \sdhalfdet[yshift=1mm]{a}{B(\alpha)}{2}{2} & \\
            \sddet{f}{Pf}{1}{1} & \sdinvis{i1} \\
            \sdwhitedot{samp1} & \sdinvis{i2}\\
            \sdout{outT}{T} & \sdout{outO}{O} \\
        } {
            \sdconnect{inPS out1}{a in1}
            \sdconnect{a out1}{f in1}
            \sdconnect{f out1}{samp1 in1}
            \sdconnect{samp1 out1}{outT in1}
            \sdconnect{inI out1}{a in2}
            \sdconnect{a out2}{i1 in1}
            \sdconnect{i1 out1}{outO in1}
        }
        &=
        \stringdiagram{
            \sdin{inPS}{PS} & \sdin{inI}{I} \\
            \sddet{f}{Pf}{1}{1} & \sdinvis{i1} \\
            \sdhalfdet[yshift=1mm]{a}{B(\beta)}{2}{2} & \\
            \sdwhitedot{samp1} & \sdinvis{i2} \\
            \sdout{outT}{T} & \sdout{outO}{O} \\
        } {
            \sdconnect{inPS out1}{f in1}
            \sdconnect{f out1}{a in1}
            \sdconnect{a out1}{samp1 in1}
            \sdconnect{samp1 out1}{outT in1}
            \sdconnect{inI out1}{i1 in1}
            \sdconnect{i1 out1}{a in2}
            \sdconnect{a out2}{i2 in1}
            \sdconnect{i2 out1}{outO in1}
        }.
    \end{aligneq}
    We can then use the defining property of a strongly representable Markov category in the form of \cref{samp-can-be-cancelled} to cancel off the sampling maps and conclude
    \begin{equation}
        \stringdiagram{
            \sdin{inPS}{PS} & \sdin{inI}{I} \\
            \sdhalfdet[yshift=1mm]{a}{B(\alpha)}{2}{2} & \\
            \sddet{f}{Pf}{1}{1} & \sdinvis{i1} \\
            \sdout{outT}{PT} & \sdout{outO}{O} \\
        } {
            \sdconnect{inPS out1}{a in1}
            \sdconnect{a out1}{f in1}
            \sdconnect{f out1}{outT in1}
            \sdconnect{inI out1}{a in2}
            \sdconnect{a out2}{i1 in1}
            \sdconnect{i1 out1}{outO in1}
        }
    =
            \stringdiagram{
            \sdin{inPS}{PS} & \sdin{inI}{I} \\
            \sddet{f}{Pf}{1}{1} & \sdinvis{i1} \\
            \sdhalfdet[yshift=1mm]{a}{B(\beta)}{2}{2} & \\
            \sdout{outT}{PT} & \sdout{outO}{O} \\
        } {
            \sdconnect{inPS out1}{f in1}
            \sdconnect{f out1}{a in1}
            \sdconnect{a out1}{outT in1}
            \sdconnect{inI out1}{i1 in1}
            \sdconnect{i1 out1}{a in2}
            \sdconnect{a out2}{outO in1}
        },
\end{equation}
i.e. $Pf$ is a morphism of unifilar machines.
\end{proof}

We think of the functor $B$ as taking a dynamical model (in the form of a comb machine) and converting it into an epistemic model in the form of a unifilar machine.
To unpack this, first consider a comb machine $(H,\kappa)$ as a dynamical model: we think of $H$ as a set of hidden states and $\kappa$ as a dynamical process that emits outputs and stochastically changes the hidden state as a function of the input.

Then $B((H,\kappa))$ is a unifilar machine.
In order to view it as an epistemic model, we write it using \cref{epistemic-readout} as
\begin{equation}
    \label{b-of-kappa-factoring}
        B \left(
        \stringdiagram{
            \sdin[-1mm]{inS}{H} & \sdin{inI}{I} \\
            \sdstoch{u}{\kappa}{2}{2} & \\
            \sdout[-1mm]{outS}{H} & \sdout{outO}{O} \\
        }{
            \sdconnect{inS out1}{u in1}
            \sdconnect{inI out1}{u in2}
            \sdconnect{u out1}{outS in1}
            \sdconnect{u out2}{outO in1}
        }
        \right)
        =
            \stringdiagram{
            \sdin{inS}{PH} &\sdin{inI}{I}&  \\
            \sdblackdot{cpy1} && \\
            && \sddet{u}{P \kappa^\blob}{1}{1}  \\
            && \sdwhitedot{samp1}  \\
            & \sdinvis[xshift=-4mm]{i1} & \sdblackdot{cpy2}  \\[1em]
            \sdstoch[yshift=1mm]{c}{u}{3}{3} && \\
            \sdout{outS}{PH} && \sdout{outO}{O}  \\
        }{
            \sdconnect{inS out1}{cpy1 in1}
            \sdconnect{cpy1}{u in1}
            \sdconnect{u out1}{samp1 in1}
            \sdconnect{cpy1 out1}{c in1}
            \sdconnect{samp1 out1}{cpy2 in1}
            \sdconnect{cpy2}{c in3}
            \sdconnect{inI out1}{i1 in1}
            \sdconnect{i1 out1}{c in2}
            \sdconnect{c out1}{outS in1}
            \sdconnect{cpy2 out1}{outO in1}
        },
\end{equation}
in which the conditional $u$ is $(P\kappa^\blob\comp\opn{samp})$-g.a.s.\ deterministic as well as $(P\kappa^\blob\comp\opn{samp})$-g.a.s.\ unique. 
We will think of the state space $PH$ as the space of ``beliefs about $H$,'' and the update map $u$ as updating those beliefs using Bayesian filtering.

We imagine these beliefs to be
held by an idealised Bayesian reasoner, whose prior at any given time is an element of the state space, $PH$.
This Bayesian reasoner does not interact with the machine $\kappa$, it only observes the inputs that $\kappa$ receives and the outputs it emits in response, updating its prior to a posterior at each time step.

The output map $PH\mor{\opn{samp}_H}H\mor{\kappa^\blob}O = PH\mor{P\kappa^\blob}PO\mor{\opn{samp}_O}O$ ``simulates'' the output of $\kappa$.
The map $P\kappa^\blob$ maps the reasoner's prior beliefs about $H$ to its beliefs about the next output it will observe.

The update map $u$ performs Bayesian filtering.
It takes as input a probability measure over the hidden states along with an input and an output, and it returns a new probability measure over hidden states.
We think of it as taking a prior over the \emph{current} value hidden state and returning a posterior distribution over the \emph{next} value of the hidden state, conditioned on the observed output.
It thus combines Bayesian updating with ``simulating'' the stochastic change in $H$.

The output from $u$ is the posterior distribution.
It is only defined up to almost sure equivalence.
In the case where $O$ is finite this is because for a given output $o\in O$ and a given belief $b\in PH$ we might have $(b\comp P\kappa^\blob)(o) = 0$, i.e.\ the output $o$ is ``subjectively impossible'' according to the agent's current epistemic state.
In this case calculating the Bayesian posterior in the usual way would lead to a division by zero, so there is no consistent value that the posterior distribution could take.
%
Since the update map $u$ is only defined up to $(P\kappa^\blob\comp\opn{samp})$-g.a.s.\ equality its output only matters in those cases where this doesn't happen.

As one would expect from a Bayesian filter, instances of the map $u$ can be chained together in such a way that, given an initial distribution over $H$ and a sequence of inputs, we can recover the posterior over $H$ for a given observed sequence of outputs.
We give a precise statement of this in \cref{filtering-on-sequences}, though we omit the proof for reasons of space.

We can thus regard the functor $B$ as taking a dynamical model as input and turning it into an epistemic model.
%
We remark that a similar operation is performed in the process of solving a partially observable Markov decision process (POMDP).
A POMDP consists of some kind of machine --- for simplicity let us say a comb machine $(H,\kappa)$ --- together with a reward function.
This machine is a dynamical model of some environment, and the goal is to find a ``policy'' that maximises the expected amount of reward that is accumulated over time, usually with an exponential discounting factor.
(We will not consider reward functions in the present work.)
A common solution technique involves converting the POMDP into a Markov decision process (MDP), which is a simpler class of problem.
In an MDP the state space is assumed to be fully observed, so that there is no need to consider outputs.
In an MDP the machine only takes inputs, and changes state stochastically as a function of its input, so it can be seen as an object of $\oc{CombMachine}(I,1)$.
%
%
Again there is an associated reward function, which we will not consider in detail.
To turn a POMDP into an MDP one forms the so-called ``belief MDP'', whose state space is given by probability distributions over $H$.
In our framework it is given by
$
        \stringdiagram{
            \sdin[-1mm]{inS}{PH} & \sdin{inI}{I} \\
            \sdhalfdet{u}{B(\kappa)}{2}{2} & \\[1em]
            & \sdblackdot{del1} \\
            \sdout[-1mm]{outS}{PH} & \\
        }{
            \sdconnect{inS out1}{u in1}
            \sdconnect{inI out1}{u in2}
            \sdconnect{u out1}{outS in1}
            \sdconnect[O]{u out2}{del1 in1}
        }.
$
Note that this is a stochastic map in general.
For an approach to POMDPs that is closely related to the present work, see \cite{biehl2023-interpreting}.

The following is our main technical result.

\begin{theorem}
    \label{adjunction-result}
    When \cat{C} is a strongly representable Markov category, the functor $B$ is right adjoint to the forgetful functor $F$,
    \begin{equation*}
        \begin{tikzcd}
            \oc{CombMachine}(I,O)\arrow[r, leftarrow, shift left=1ex, "F"{name=G}] & \oc{UnifilarComb}(I,O)\arrow[l, leftarrow, shift left=.5ex, "B"{name=F}]
            \arrow[phantom, from=F, to=G, , "\scriptscriptstyle\boldsymbol{\bot}"].
        \end{tikzcd}
\end{equation*}
\end{theorem}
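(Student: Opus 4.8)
The plan is to establish the adjunction $F\dashv B$ by exhibiting a bijection of hom-sets
\[ \Hom_{\oc{CombMachine}(I,O)}(F\alpha,\kappa)\;\cong\;\Hom_{\oc{UnifilarComb}(I,O)}(\alpha, B\kappa), \]
natural in the unifilar machine $(S,\alpha)$ and the comb machine $(H,\kappa)$. I would build this on top of the base adjunction recalled in the background, namely that $P\colon\cat{C}\to\cat{C}_\det$ is right adjoint to the inclusion $\cat{C}_\det\hookrightarrow\cat{C}$, with unit $\delta$ and counit $\opn{samp}$. Concretely, a morphism $F\alpha\to\kappa$ is a (possibly stochastic) map $g\colon S\to H$ satisfying the comb-morphism condition $(\star)$, namely $\alpha\comp(\id_O\otimes g)=(\id_I\otimes g)\comp\kappa$; and a morphism $\alpha\to B\kappa$ is a \emph{deterministic} map $h\colon S\to PH$ satisfying the analogous condition $(\star\star)$ with $B\kappa$ in place of $\kappa$. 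I would take the bijection to be $g\mapsto g^\squ$ with inverse $h\mapsto h\comp\opn{samp}_H$; these are mutually inverse purely by the universal property of the distribution object $PH$ (using $g^\squ\comp\opn{samp}_H=g$ and the uniqueness of the lift of a deterministic map).

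The heart of the argument is to show that $g$ satisfies $(\star)$ if and only if $g^\squ$ satisfies $(\star\star)$. The key observation is that post-composing the state output of $(\star\star)$ with $\opn{samp}_H$ turns it back into $(\star)$: on the left-hand side $g^\squ\comp\opn{samp}_H=g$, and on the right-hand side the defining property of $B\kappa$ from \cref{action-of-b-on-objects}, in the form $B\kappa\comp(\opn{samp}_H\otimes\id_O)=(\opn{samp}_H\otimes\id_I)\comp\kappa$, lets one slide the sampling map through, again leaving $g$. Thus the \emph{sampled} version of $(\star\star)$ is literally $(\star)$. One direction is then immediate, since sampling $(\star\star)$ yields $(\star)$. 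For the converse I would invoke \cref{samp-can-be-cancelled}: both sides of $(\star\star)$ are deterministic given $O$ — the right-hand side being $B\kappa$ precomposed with the deterministic state map $g^\squ$, the left-hand side being the deterministic-given-$O$ map $\alpha$ postcomposed with $g^\squ$ on its state wire — so agreement after sampling forces agreement of the maps themselves. This is exactly the pattern already used in the proof of \cref{b-is-a-functor}, and it is where strong representability does the real work.

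It then remains to check naturality in $\alpha$ and $\kappa$. Since the bijection and its inverse are given by the very formulas $(\blank)^\squ$ and $(\blank)\comp\opn{samp}$ that witness the base adjunction, naturality reduces to naturality of that adjunction together with functoriality of $F$ and $B$ (the latter from \cref{b-is-a-functor}). Equivalently — and perhaps more cleanly — one can present the data as a counit $\epsilon_\kappa$ with underlying map $\opn{samp}_H\colon PH\to H$ and a unit $\eta_\alpha$ with underlying map $\delta_S\colon S\to PS$: the map $\opn{samp}_H$ is a comb-machine morphism $FB\kappa\to\kappa$ by precisely \cref{action-of-b-on-objects}, and $\delta_S=\id_S^\squ$ is a unifilar morphism $\alpha\to BF\alpha$ by the ``only if'' direction above applied to $g=\id_S$. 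The two triangle identities then collapse to the base-adjunction identities $\delta_S\comp\opn{samp}_S=\id_S$ and $\delta_{PH}\comp P(\opn{samp}_H)=\id_{PH}$ (the latter being a monad unit law, since $P(\opn{samp}_H)=\mu_H$).

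The main obstacle I anticipate is not the bijection itself but the bookkeeping around the deterministic-given-$O$ hypothesis needed to license \cref{samp-can-be-cancelled}: one must confirm that composing the deterministic-given-$O$ transition maps $\alpha$ and $B\kappa$ with deterministic maps on their state wires again yields deterministic-given-$O$ morphisms. This is intuitively clear and is used implicitly, via the half-black boxes, in \cref{b-is-a-functor}, but making it fully rigorous requires a short diagrammatic lemma drawn from \cref{deterministic-given-x-condition}. Everything else is a routine consequence of the universal property of distribution objects.
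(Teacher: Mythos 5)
Your proposal is correct and takes essentially the same route as the paper's own proof: the hom-set bijection $f\leftrightarrow f^\squ$, sliding sampling maps through via \cref{action-of-b-on-objects}, cancelling them with \cref{samp-can-be-cancelled} (which is exactly where strong representability and the deterministic-given-$O$ hypothesis enter), and deducing naturality from the underlying adjunction between $\cat{C}_\det\hookrightarrow\cat{C}$ and $P$. Your explicit unit/counit formulation and your flagging of the implicit lemma that composing deterministic-given-$O$ morphisms with deterministic maps on the state wire preserves that property are refinements of detail (the paper leaves both implicit), not a different argument.
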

\begin{proof}
    We show that if $f\colon S\to H$ is the map in $\cat{C}$ underlying a morphism $F((S,\alpha)) \to (H,\kappa)$ in $\oc{Comb}$- $\oc{Machine}(I,O)$ then $f^\squ\colon S\to PH$ is the deterministic map underlying a morphism $(S,\alpha)\to B((H,\kappa))$ in $\oc{UnifilarComb}(I,O)$, and vice versa.
    This will form the natural isomorphism of hom-sets needed for an adjunction.

    Suppose $f\colon S\to H$ is the map underlying a morphism $F((S,\alpha)) \to (H,\kappa)$ in $\oc{CombMachine}(I,O)$.
    Then we have the following (where, as always, all diagrams are in \cat{C}):
    \begin{aligneq}
        \stringdiagram{
            \sdin{inS}{S} & \sdin{inI}{I} \\
            \sdhalfdet[yshift=1mm]{a}{F(\alpha)}{2}{2} & \\
            \sdstoch{f}{f}{1}{1} & \sdinvis{i1} \\
            \sdout{outH}{H} & \sdout{outO}{O} \\
        }{
            \sdconnect{inS}{a in1}
            \sdconnect{a out1}{f in1}
            \sdconnect{f out1}{outH in1}
            \sdconnect{inI out1}{a in2}
            \sdconnect{a out2}{i1 in1}
            \sdconnect{i1 out1}{outO in1}
        }
        &=
        \stringdiagram{
            \sdin{inS}{S} & \sdin{inI}{I} \\
            \sdstoch{f}{f}{1}{1} & \sdinvis{i1} \\
            \sdstoch[yshift=1mm]{a}{\kappa}{2}{2} & \\
            \sdout{outH}{H} & \sdout{outO}{O} \\
        }{
            \sdconnect{inS}{f in1}
            \sdconnect{f out1}{a in1}
            \sdconnect{a out1}{outH in1}
            \sdconnect{inI out1}{i1 in1}
            \sdconnect{i1 out1}{a in2}
            \sdconnect{a out2}{outO in1}
        }
        \\[1ex]
        \stringdiagram{
            \sdin{inS}{S} & \sdin{inI}{I} \\
            \sdhalfdet[yshift=1mm]{a}{F(\alpha)}{2}{2} & \\
            \sddet{f}{f^\squ}{1}{1} & \sdinvis{i1} \\
            \sdwhitedot{samp1} & \\
            \sdout{outH}{H} & \sdout{outO}{O} \\
        }{
            \sdconnect{inS}{a in1}
            \sdconnect{a out1}{f in1}
            \sdconnect{f out1}{outH in1}
            \sdconnect{inI out1}{a in2}
            \sdconnect{a out2}{i1 in1}
            \sdconnect{i1 out1}{outO in1}
        }
        &=
        \stringdiagram{
            \sdin{inS}{S} & \sdin{inI}{I} \\
            \sddet{f}{f^\squ}{1}{1} &  \\
            \sdwhitedot{samp1} & \sdinvis{i1}\\
            \sdstoch[yshift=1mm]{a}{\kappa}{2}{2} & \\
            \sdout{outH}{H} & \sdout{outO}{O} \\
        }{
            \sdconnect{inS}{f in1}
            \sdconnect{f out1}{a in1}
            \sdconnect{a out1}{outH in1}
            \sdconnect{inI out1}{i1 in1}
            \sdconnect{i1 out1}{a in2}
            \sdconnect{a out2}{outO in1}
        }        
        \\[1ex]
        \stringdiagram{
            \sdin{inS}{S} & \sdin{inI}{I} \\
            \sdhalfdet[yshift=1mm]{a}{\alpha}{2}{2} & \\
            \sddet{f}{f^\squ}{1}{1} & \sdinvis{i1} \\
            \sdwhitedot{samp1} & \\
            \sdout{outH}{H} & \sdout{outO}{O} \\
        }{
            \sdconnect{inS}{a in1}
            \sdconnect{a out1}{f in1}
            \sdconnect{f out1}{outH in1}
            \sdconnect{inI out1}{a in2}
            \sdconnect{a out2}{i1 in1}
            \sdconnect{i1 out1}{outO in1}
        }
        &=
        \stringdiagram{
            \sdin{inS}{S} & \sdin{inI}{I} \\
            \sddet{f}{f^\squ}{1}{1} & \sdinvis{i1} \\
            \sdhalfdet[yshift=1mm]{a}{B(\kappa)}{2}{2} & \\
            \sdwhitedot{samp1} & \sdinvis{i2}\\
            \sdout{outH}{H} & \sdout{outO}{O} \\
        }{
            \sdconnect{inS}{f in1}
            \sdconnect{f out1}{a in1}
            \sdconnect{a out1}{outH in1}
            \sdconnect{inI out1}{i1 in1}
            \sdconnect{i1 out1}{a in2}
            \sdconnect{a out2}{i2 in1}
            \sdconnect{i2 out1}{outO in1}
        }.
    \end{aligneq}
    We can then use representability of \cat{C} in the form of \cref{samp-can-be-cancelled} again to conclude that
    \begin{equation}
                \stringdiagram{
            \sdin{inS}{S} & \sdin{inI}{I} \\
            \sdhalfdet[yshift=1mm]{a}{\alpha}{2}{2} & \\
            \sddet{f}{f^\squ}{1}{1} & \sdinvis{i1} \\
            \sdout{outH}{H} & \sdout{outO}{O} \\
        }{
            \sdconnect{inS}{a in1}
            \sdconnect{a out1}{f in1}
            \sdconnect{f out1}{outH in1}
            \sdconnect{inI out1}{a in2}
            \sdconnect{a out2}{i1 in1}
            \sdconnect{i1 out1}{outO in1}
        }
        =
        \stringdiagram{
            \sdin{inS}{S} & \sdin{inI}{I} \\
            \sddet{f}{f^\squ}{1}{1} & \sdinvis{i1} \\
            \sdhalfdet[yshift=1mm]{a}{B(\kappa)}{2}{2} & \\
            \sdout{outH}{H} & \sdout{outO}{O} \\
        }{
            \sdconnect{inS}{f in1}
            \sdconnect{f out1}{a in1}
            \sdconnect{a out1}{outH in1}
            \sdconnect{inI out1}{i1 in1}
            \sdconnect{i1 out1}{a in2}
            \sdconnect{a out2}{outO in1}
        },
    \end{equation}
    so that $f^\squ$ underlies a morphism $(S,\alpha)\to B((H,\kappa))$ in $\oc{UnifilarComb}(I,O)$.
    Each of these steps can be reversed, so this gives a bijection $\oc{CombMachine}(I,O)(F(\blank),\bblank) \cong \oc{UnifilarComb}(I,O)(\blank,B(\bblank))$.
    Naturality follows from the naturality of the sampling map.
\end{proof}

This adjunction is related to the one between $P\colon \cat{C}_\det\to\cat{C}$ and $\cat{C}_\det\hookrightarrow \cat{C}$ in a representable Markov category, and it shares the same unit and counit.
The unit has components $\delta_X\colon X\to PX$ and the counit has components $\opn{samp}_X\colon PX \to X$, where $PX=BX$ on objects.

The existence of this adjunction has some interesting consequences.
We have already established that the unifilar machine $B((H,\kappa))$ can be seen as an epistemic model of the comb machine $(H,\kappa)$, seen as a dynamical model.
But now consider a morphism $(S,\alpha)\to B((H,\kappa))$ in $\oc{UnifilarComb}(I,O)$ from some other unifilar machine into $B((H,\kappa))$.
We argue that when equipped with such a morphism, $(S,\alpha)$ \emph{also} deserves to be seen as modelling $(H,\kappa)$.

To see this we consider its adjoint map $F((S,\alpha))\to (H,\kappa)$, which is given by an underlying map $\psi\colon S\to H$ in $\cat{C}$ such that
\begin{equation}
        \stringdiagram{
            \sdin{inS}{S} & \sdin{inI}{I} \\
            \sdstoch{f}{\psi}{1}{1} & \sdinvis{i1} \\
            \sdstoch[yshift=1mm]{a}{\kappa}{2}{2} & \\
            \sdout{outH}{H} & \sdout{outO}{O} \\
        }{
            \sdconnect{inS}{f in1}
            \sdconnect{f out1}{a in1}
            \sdconnect{a out1}{outH in1}
            \sdconnect{inI out1}{i1 in1}
            \sdconnect{i1 out1}{a in2}
            \sdconnect{a out2}{outO in1}
        }
        =
        \stringdiagram{
            \sdin{inS}{S} & \sdin{inI}{I} \\
            \sdhalfdet[yshift=1mm]{a}{\alpha}{2}{2} & \\
            \sdstoch{f}{\psi}{1}{1} & \sdinvis{i1} \\
            \sdout{outH}{H} & \sdout{outO}{O} \\
        }{
            \sdconnect{inS}{a in1}
            \sdconnect{a out1}{f in1}
            \sdconnect{f out1}{outH in1}
            \sdconnect{inI out1}{a in2}
            \sdconnect{a out2}{i1 in1}
            \sdconnect{i1 out1}{outO in1}
        }
        ,
\end{equation}
or
\begin{equation}
        \stringdiagram{
            \sdin{inS}{S} & \sdin{inI}{I} \\
            \sdstoch{f}{\psi}{1}{1} & \sdinvis{i1} \\
            \sdstoch[yshift=1mm]{a}{\kappa}{2}{2} & \\
            \sdout{outH}{H} & \sdout{outO}{O} \\
        }{
            \sdconnect{inS}{f in1}
            \sdconnect{f out1}{a in1}
            \sdconnect{a out1}{outH in1}
            \sdconnect{inI out1}{i1 in1}
            \sdconnect{i1 out1}{a in2}
            \sdconnect{a out2}{outO in1}
        }
        =
        \stringdiagram{
            \sdin{inS}{S} & \sdin{inI}{I}& \\
            \sdblackdot{cpy1} && \\
            && \sdstoch{a}{\alpha^\blob}{1}{1}  \\
            & \sdinvis[xshift=-4mm]{i1} & \sdblackdot{cpy2} \\
            \sdstoch[yshift=1mm]{u}{u}{3}{3} && \\
            \sdstoch{f}{\psi}{1}{1} && \\
            \sdout{outH}{H} && \sdout{outO}{O} \\
        }{
            \sdconnect{inS}{u in1}
            \sdconnect{u out1}{f in1}
            \sdconnect{f out1}{outH in1}
            \sdconnect{cpy1}{a in1}
            \sdconnect{a out0}{outO in1}
            \sdconnect{cpy2}{u in3}
            \sdconnect{inI out1}{i1 in1}
            \sdconnect{i1 out1}{u in2}
        }
        ,
\end{equation}
where $u$ is an update map for $\alpha$.
By marginalising both sides (i.e.\ post-composing with $\id_O\otimes\opn{del}_H$) we have $\alpha^\blob = \psi\comp\kappa^\blob$, so this equation becomes
\begin{equation}
        \label{jacobs-equation-for-filtering}
        \stringdiagram{
            \sdin{inS}{S} & \sdin{inI}{I} \\
            \sdstoch{f}{\psi}{1}{1} & \sdinvis{i1} \\
            \sdstoch[yshift=1mm]{a}{\kappa}{2}{2} & \\
            \sdout{outH}{H} & \sdout{outO}{O} \\
        }{
            \sdconnect{inS}{f in1}
            \sdconnect{f out1}{a in1}
            \sdconnect{a out1}{outH in1}
            \sdconnect{inI out1}{i1 in1}
            \sdconnect{i1 out1}{a in2}
            \sdconnect{a out2}{outO in1}
        }
        =
        \stringdiagram{
            \sdin{inS}{S} & \sdin{inI}{I} & \\
            \sdblackdot{cpy1} && \\
            && \sdstoch{a}{\psi}{1}{1}  \\
            && \sdstoch{k}{\kappa^\blob}{1}{1}  \\
            & \sdinvis[xshift=-4mm]{i1} & \sdblackdot{cpy2} \\
            \sdstoch[yshift=1mm]{u}{u}{3}{3} && \\
            \sdstoch{f}{\psi}{1}{1} && \\
            \sdout{outH}{H} && \sdout{outO}{O} \\
        }{
            \sdconnect{inS}{u in1}
            \sdconnect{u out1}{f in1}
            \sdconnect{f out1}{outH in1}
            \sdconnect{cpy1}{a in1}
            \sdconnect{a out0}{outO in1}
            \sdconnect{cpy2}{u in3}
            \sdconnect{inI out1}{i1 in1}
            \sdconnect{i1 out1}{u in2}
        }
        ,
\end{equation}
where $u$ is $\psi\comp\kappa^\blob$-g.a.s.\ deterministic.
This is a Bayesian filtering version of Jacobs' \cite{jacobs2020-conjugate} definition of conjugate priors.
It is not quite the same as the one in \cite{virgo2021-interpreting} because in that paper $u$ is not assumed to be almost-surely deterministic, so a stronger equation is needed.
However, it is conceptually the same.

The morphism $\psi$ can be regarded as what the author and colleagues called an \emph{interpretation map} in~\cite{virgo2021-interpreting}.
This means we think of the update map $u$ as a physical machine whose job is to keep track of an epistemic model of $\kappa$.
At each step it receives both the input that was given to $\kappa$ and the output that $\kappa$ emitted in response.
The machine's physical state ($S$) then updates in a ($\psi\comp\kappa^\blob$-g.a.s.) deterministic way.

\Cref{jacobs-equation-for-filtering} expresses the idea that when the machine receives a new piece of information in the form of an $(i,o)$ pair it should update its beliefs in a consistent way.
The left-hand side can be seen as the agent's current beliefs about the {next} output and the {next} value of the hidden state, as a function of the next input.
The equation says that after receiving an input and output pair, its new beliefs about the {current} hidden state should equal a conditional of its prior beliefs, conditioned on $i$ and $o$.

The adjoint map $\psi^\squ\colon S\to PH$ can then be seen as mapping the unifilar machine's physical state to a probability measure over $H$ that we think of as ``the machine's beliefs about $H$,'' i.e.\ its current Bayesian prior.
Since $\psi^\squ$ underlies a morphism $(S,\alpha)\to B((H,\kappa))$ it means that $\alpha$'s updates have to be able to `simulate' the idealised Bayesian filtering that $B((H,\kappa))$ performs.
The map $\psi^\squ$ can thus be seen as assigning a semantic {meaning} to the states of the unifilar machine.

We now state the corresponding result for Mealy machines: as for comb machines there are functors $\oc{Mealy}(I,O) \mathrel{\underset{F}{\overset{B}{\smash[b]{\rightleftarrows}}}} \oc{UnifilarMealy}(I,O)$ such that $F$ is left adjoint to $B$.
The definitions and proofs are the same as for comb machines and unifilar comb machines, except that we don't need to care about the comb condition.
These functors can be thought of in the same terms, with $B$ mapping a dynamical model to a corresponding epistemic model.
The Mealy machine version of \cref{jacobs-equation-for-filtering} is
\begin{equation}
        \label{jacobs-equation-for-filtering-mealy-version}
        \stringdiagram{
            \sdin{inS}{S} & \sdin{inI}{I} \\
            \sdstoch{f}{\psi}{1}{1} & \sdinvis{i1} \\
            \sdstoch[yshift=1mm]{a}{\kappa}{2}{2} & \\
            \sdout{outH}{H} & \sdout{outO}{O} \\
        }{
            \sdconnect{inS}{f in1}
            \sdconnect{f out1}{a in1}
            \sdconnect{a out1}{outH in1}
            \sdconnect{inI out1}{i1 in1}
            \sdconnect{i1 out1}{a in2}
            \sdconnect{a out2}{outO in1}
        }
        =
        \stringdiagram{
            \sdin{inS}{S} && \sdin{inI}{I} \\
            \sdblackdot{cpy1} && \\
            & \sdstoch{a}{\psi}{1}{1} & \\
            && \sdblackdot{cpy3} \\
            & \sdstoch[yshift=1mm]{k}{\kappa}{2}{2} & \\
            & \sdblackdot{del1} & \sdinvis{i1} \\[1em]
            & \sdblackdot[yshift=2mm,xshift=-2mm]{cpy2} &  \\
            \sdstoch[yshift=1mm]{u}{u}{3}{3} && \\
            \sdstoch{f}{\psi}{1}{1} && \\
            \sdout{outH}{H} & \sdout[2mm]{outO}{O} \\
        }{
            \sdconnect{inS}{u in1}
            \sdconnect{u out1}{f in1}
            \sdconnect{f out1}{outH in1}
            \sdconnect{cpy1}{a in1}
            \sdconnectlabel{H}{below=2pt}{a out0}{k in1}
            \sdconnect{k out2}{outO in1}
            \sdconnectlabel{H}{below=2pt}{k out1}{del1 in1}
            \sdconnect{cpy3}{k in2}
            \sdconnect{cpy2}{u in3}
            \sdconnect{inI out1}{i1 in1}
            \sdconnect{i1 out1}{u in2}
        }.
\end{equation}

An example with enormous practical importance in control theory is the Kalman filter.
Although Kalman \cite{kalman1960-filtering} originally derived it in terms of error minimisation it is well known that it can also be constructed as a Bayesian filter.
(See \cite{gurajala2021-kalman} for a somewhat informal exposition, for example.)

Here we give only the briefest sketch of how the Kalman filter can be formulated in our framework.
We consider a version with measurement noise but no input signal.
Unlike most treatments we allow the measurement noise and the process noise to be correlated.

\begin{example}[Kalman filter]
    \label{kalman-filter-example}
    Let $\cat{C} = \oc{BorelStoch}$ and let $I=1, H=\R^n, O=\R^m$.
    Consider a comb machine $(H,\kappa)$ where $\kappa(\blank\mid h)$ is normally distributed according to $\kappa(\blank\mid h) \sim \mathscr{N}(Ah, \Sigma)$, where $A$ is an $(m+n)\times   n$ matrix and the $(m+n)\times (m+n)$ covariance matrix $\Sigma$ doesn't depend on $h$.
    Taking $(H,\kappa)$ as a dynamical model of a process, we want to construct a unfilar comb machine that will act as an epistemic model.
    
    To do this, we first note that if $p\colon 1\to H$ is a normal distribution with mean $\bar h$ and covariance matrix $\Sigma_p$, then 
    $
    \stringdiagram{
        \sdstoch{p}{p}{1}{2} \\
        \sdstoch{k}{\kappa}{2}{2} \\
        \sdinvisn{out}{2} \\
    }{
        \sdconnect{p out1}{k in1}
        \sdconnect{k out1}{out in1}
        \sdconnect{k out2}{out in2}
    }
    $
    is also Gaussian, with mean $\bar s$ and covariance $\Sigma'\coloneqq A\Sigma_p A^T + \Sigma$.
    (See section 6 of \cite{fritz2020-markov}, for example.)
    Writing $\Sigma'$ in block form as $\Sigma' = \big(\begin{smallmatrix}
        \Sigma'_{OO} & \Sigma'_{OH} \\
        \Sigma'_{HO} & \Sigma'_{HH} \\
    \end{smallmatrix}\big)$, this
    distribution $p\comp\kappa$ has a conditional $c\colon O\to H$ given by
    \begin{equation}
    \label{gaussian-conditional}
    c(\blank\mid o) \sim \mathscr{N}(\Sigma'_{HO}\Sigma'^-_{OO} o \,,\, \Sigma'_{HH} - \Sigma'_{HO}\Sigma'^-_{OO}\Sigma'_{OH}),
    \end{equation}
    where $\Sigma'^-_{OO}$ is the Moore-Penrose pseudoinverse of $\Sigma'_{OO}$.
    (See example 11.8 of \cite{fritz2020-markov}.)

    Let us therefore define a unifilar machine $(S,\alpha)$ where $S$ is the set of pairs $(\bar h, \Sigma_p)$, where $\bar h\in \R^n$ and $\Sigma_p$ is an $n\times n$ positive definite matrix.
    To define $\alpha\colon S\to O\times S$ we first define the map $\psi\colon S\to H$, which maps $(\bar h, \Sigma_p)$ to a Gaussian with mean $\bar h$ and covariance $\Sigma_p$.
    We can define $\alpha\colon S\to O\times S$ by the readout function $
    \stringdiagram{
        \sdin{in}{S} \\
        \sdstoch{a}{\alpha^\blob}{1}{1} \\
        \sdout{out}{O} \\
    } {
        \sdconnect{in}{out}
    }
    =
    \stringdiagram {
        \sdinvis{i1} \\
        \sdstoch{n}{\psi}{1}{1} \\
        \sdstoch{f}{\kappa}{1}{2} \\[-1.5mm]
        \sdinvisn{i2}{2} \\[-1.5mm]
        \sdinvisn{i3}{2} \\
    } {
        \sdblackdotat{i2 in1}
        \sdconnect{i1}{f in1}
        \sdconnect{f out1}{i2 in1}
        \sdconnect{f out2}{i3 in2}
    }
$,
and a deterministic update map $u\colon O\times S\to S$ that maps $((\bar h, \Sigma_p),o)$ to $(\Sigma'_{HO}\Sigma'^-_{OO} o \,,\, \Sigma'_{HH} - \Sigma'_{HO}\Sigma'^-_{OO}\Sigma'_{OH})$, as in \cref{gaussian-conditional}.

By construction, the maps $u$, $\psi$, $\alpha^\blob$ and $\kappa$ obey \cref{jacobs-equation-for-filtering}, and we can conclude that $\psi^\squ$ is a map of unfilar machines from $(S,\alpha)$ to $B((H,\kappa))$.

The update map $u$ is a version of the Kalman filter.
Its state space $H$ parametrises Gaussian distributions via the map $\psi^\squ$.
The machine $(H,\kappa)$ is such that for a Gaussian prior the posterior will also be a Gaussian, and therefore the deterministic map $u$ only has to update the parameters upon receiving new data.
We note a similarity between Kalman filtering and the category \oc{Gauss} defined in \cite{fritz2020-markov}, which we referred to in deriving it.
\end{example}

\subsection{Bayesian Inference and Conjugate Priors}
\label{bayesian-inference-section}

Up to now we have considered a version of Bayesian filtering in which the systems being modelled have the form of a comb machine.
In this section we consider an important special case of this, in which the system being modelled simply emits independent and identically distributed outputs.
This corresponds to the standard setting of Bayesian inference, where we receive independent samples from a known distribution with an unknown (but fixed) value for its parameters, and wish to use this data to make inferences about the parameters.

In this section we primarily consider machines whose input space is the terminal object in $\cat{C}$.
In this case the distinction between comb machines and Mealy machines isn't relevant, and we refer to such machines as generators, defining $\oc{Generator}(X) = \oc{Mealy}(1,X) \cong \oc{UnifilarMachine}(1,X)$ and $\oc{UnifilarGenerator}(X) = \oc{UnifilarMealy}(1,X) \cong \oc{UnifilarComb}(1,X)$.

To model Bayesian inference in our setup we consider objects of $\oc{Generator}(X)$ represented by morphisms in $\cat{C}$ of the following special form:
\begin{equation}
    \stringdiagram{
        \sdin{inT}{\Theta} & \\
        \sdstoch[yshift=1mm]{f}{f^\circ}{2}{2} & \\
        \sdout{outT}{\Theta} & \sdout{outX}{X} \\
    }{
        \sdconnect{inT}{f in1}
        \sdconnect{f out2}{outX in1}
        \sdconnect{f out1}{outT in1}
    }
    \coloneqq
    \stringdiagram{
        \sdin{inT}{\Theta} & \\
        \sdblackdot{cpy1} & \\
        & \sdstoch{f}{f}{1}{1} \\
        \sdout{outT}{\Theta} & \sdout{outX}{X} \\
    }{
        \sdconnect{inT}{cpy1 in1}
        \sdconnect{cpy1}{f in1}
        \sdconnect{f out1}{outX in1}
        \sdconnect{cpy1 out1}{outT in1}
    }.
\end{equation}
In this setting we call $X$ the \emph{sample space} and $\Theta$ the \emph{parameter space}, and we think of $f$ as a statistical model, that is, a family of distributions over $X$ parametrised by $\Theta$.

Applying the functor $B$ we get
\begin{equation}
    \label{bayes-for-f}
    \stringdiagram{
        \sdin{inT}{P\Theta} & \\
        \sdhalfdet[yshift=1mm]{f}{B(f^\circ)}{2}{2} & \\
        \sdout{outT}{P\Theta} & \sdout{outX}{X} \\
    }{
        \sdconnect{inT}{f in1}
        \sdconnect{f out2}{outX in1}
        \sdconnect{f out1}{outT in1}
    }
    =
    \stringdiagram{
        \sdin{inT}{P\Theta} & \\
        \sdblackdot{cpy1} & \\
        & \sddet{f}{Pf}{1}{1} \\
        & \sdwhitedot{samp1} \\
        & \sdblackdot{cpy2} & \\
        \sdstoch[yshift=1mm]{bf}{\opn{Bayes}_f}{2}{2} \\
        \sdout{outT}{P\Theta} & \sdout{outX}{X} \\
    }{
        \sdconnect{inT}{cpy1 in1}
        \sdconnect{cpy1}{f in1}
        \sdconnect{f out1}{samp1 in1}
        \sdconnect{samp1 out1}{outX in1}
        \sdconnect{cpy2}{bf in2}
        \sdconnect{cpy1 out1}{bf in1}
        \sdconnect{bf out1}{outT in1}
    },
\end{equation}
where we have called the conditional $\opn{Bayes}_f$ because that is what it does: it takes in a prior over the parameters together with some data $x\in X$, and returns the Bayesian posterior over the parameters, according to the model $f$.
We give a precise statement and proof for this claim in \cref{bayes-f-does-bayes}.

If we consider a map $\psi^\squ$ into this machine from some other unifilar machine $(S,\alpha)$, we obtain exactly the notion of a conjugate prior.
Its adjoint map of comb machines, $\psi\colon F((S,\alpha))\to f^\circ$, obeys
\begin{equation}
        \label{jacobs-equation-per-se}
        \stringdiagram{
            \sdin{inS}{S} & \\
            \sdstoch{psi}{\psi}{1}{1} & \sdinvis{i1} \\
            \sdblackdot{cpy1} & \\
            & \sdstoch{f}{f}{1}{1} \\
            \sdout{outH}{\Theta} & \sdout{outO}{X} \\
        }{
            \sdconnect{inS}{psi in1}
            \sdconnect{psi out1}{cpy1 in1}
            \sdconnect{cpy1 out1}{outH in1}
            \sdconnect{cpy1}{f in1}
            \sdconnect{f out1}{outO in1}
        }
        =
        \stringdiagram{
            \sdin{inS}{S} && \\
            \sdblackdot{cpy1} && \\
            & \sdstoch{a}{\psi}{1}{1} & \\
            & \sdstoch{k}{f}{1}{1} & \\
            & \sdblackdot{cpy2} & \sdinvis{i1} \\
            \sdstoch[yshift=1mm]{u}{u}{2}{2} && \\
            \sdstoch{f}{\psi}{1}{1} && \\
            \sdout{outH}{\Theta} & \sdout{outO}{X} \\
        }{
            \sdconnect{inS}{u in1}
            \sdconnect{u out1}{f in1}
            \sdconnect{f out1}{outH in1}
            \sdconnect{cpy1}{a in1}
            \sdconnect{a out0}{outO in1}
            \sdconnect{cpy2}{u in2}
        }
        ,
\end{equation}
which is the equation given in \cite{jacobs2020-conjugate} as a definition of conjugate prior.
The only minor difference is that here the update map is only defined up to $(\psi\comp f)$-g.a.s.\ equality, instead of being a specified deterministic function.
We think of $\psi\colon S\to \Theta$ as a statistical model and say that it is a conjugate prior for $f$.
Its parameter space $S$ is referred to as the space of hyperparameters.
%
Obtaining this more abstract perspective on the definition from \cite{jacobs2020-conjugate} was one of the main motivations of this work.

It is worth briefly mentioning the further special case in which $f$ is the sampling map, although we will not make use of it.
\begin{equation}
    \label{bayes-for-X}
    B
    \left(
    \stringdiagram{
        \sdin{inT}{PX} &[1ex] \\
        \sdblackdot{cpy1} & \\
        & \sdwhitedot{f} \\
        \sdout{outT}{PX} & \sdout{outX}{X} \\
    }{
        \sdconnect{inT}{cpy1 in1}
        \sdconnect{cpy1}{f in1}
        \sdconnect{f out1}{outX in1}
        \sdconnect{cpy1 out1}{outT in1}
    }        
    \right)
=
    \stringdiagram{
        \sdin{inT}{PPX} &[1ex] \\
        \sdblackdot{cpy1} & \\
        & \sdwhitedot{f} \\[1ex]
        & \sdwhitedot{samp1} \\
        & \sdblackdot{cpy2} & \\
        \sdstoch[yshift=1mm]{bf}{\opn{Bayes}_X}{2}{2} \\
        \sdout{outT}{PPX} & \sdout{outX}{X} \\
    }{
        \sdconnect{inT}{cpy1 in1}
        \sdconnect{cpy1}{f in1}
        \sdconnect[PX]{f out1}{samp1 in1}
        \sdconnect{samp1 out1}{outX in1}
        \sdconnect{cpy2}{bf in2}
        \sdconnect{cpy1 out1}{bf in1}
        \sdconnect{bf out1}{outT in1}
    }.
\end{equation}
Here $\opn{Bayes}_X$ also performs Bayesian updating, corresponding to inference about an unknown distribution.
It takes a distribution over distributions over $X$, representing a prior, along with a sample from the unknown distribution.
Its output is the Bayesian posterior over distributions, conditioned on the sample.

We note that all the generators in this section obey the property of \emph{exchangeability}, specifically the version of that concept defined in \cite{jacobs2020-de-finetti} in the context of de Finetti's theorem.
That is, they are all machines $(S,\alpha)$  such that
    \begin{equation}
        \stringdiagram{
            \sdin[-1mm]{inS}{S} && \\
            \sdstoch{a1}{\alpha}{2}{2} && \sdinvis{ia1} \\
            \sdstoch{a2}{\alpha}{2}{2} && \sdinvis{ia2} \\
            & \sdinvis{iO11} & \sdinvis{iO12} \\
            & \sdinvis{iO21} & \sdinvis{iO22} \\
            \sdout[-1mm]{outS}{S} & \sdout{outO1}{O} & \sdout{outO2}{O} \\
        }{
            \sdconnect{inS}{a1 in1}
            \sdconnect{a1 out1}{a2 in1}
            \sdconnect{a2 out1}{outS}
            \sdconnect{a1 out2}{ia2 in1}
            \sdconnect{ia2 out1}{iO12 in1}
            \sdconnect{iO12 out1}{iO21 in1}
            \sdconnect{iO21 out1}{outO1 in1}
            \sdconnect{a2 out2}{iO22 in1}
            \sdconnect{iO22 out1}{outO2 in1}
        }
        =
        \stringdiagram{
            \sdin[-1mm]{inS}{S} && \\
            \sdstoch{a1}{\alpha}{2}{2} && \sdinvis{ia1} \\
            \sdstoch{a2}{\alpha}{2}{2} && \sdinvis{ia2} \\
            \sdout[-1mm]{outS}{S} & \sdout{outO1}{O} & \sdout{outO2}{O} \\
        }{
            \sdconnect{inS}{a1 in1}
            \sdconnect{a1 out1}{a2 in1}
            \sdconnect{a2 out1}{outS}
            \sdconnect{a1 out2}{ia2 in1}
            \sdconnect{ia2 out1}{outO2 in1}
            \sdconnect{a2 out2}{outO1 in1}
        }.
    \end{equation}
In our context, one of the results of \cite{jacobs2020-de-finetti} is that in \oc{Stoch} (and hence also in \oc{BorelStoch}) the category \oc{Generator}(\{0,1\}) has a terminal object, given by 
$
\stringdiagram{
        \sdin{inT}{\scriptstyle{P2}} & \\
        \sdblackdot{cpy1} & \\
        & \sdwhitedot{f} \\
        \sdout{outT}{\scriptstyle{P2}} & \sdout{outX}{\scriptstyle 2} \\
    }{
        \sdconnect{inT}{cpy1 in1}
        \sdconnect{cpy1}{f in1}
        \sdconnect{f out1}{outX in1}
        \sdconnect{cpy1 out1}{outT in1}
    }
$,
which is part of their category-theoretic treatment of de Finetti's theorem.
%
(A much more general version of de Finetti's theorem is proved for \oc{BorelStoch} in \cite{fritz2021-de-finetti}, though in a less machine-like context.)

In the context of the machines in \cref{bayes-for-f} and \cref{bayes-for-X}, exchangeability amounts to the idea that a Bayesian reasoner should reach the same posterior from the same data, regardless of the order in which the data are presented.
(Except that here this is subject to the usual generalised-almost-surely condition.)

There is much more that can be said about exchangeability and its relationship to Bayesian inference within the framework of unifilar machines, but we will leave the topic here and return to the more general case of non-exchangeable machines in the next section.

\section{Terminal objects as ``objects of behaviours''}
\label{transducer-section}

If $\oc{UnifilarComb}(I,O)$ 
has a terminal object then it can be seen as an ``object of behaviours,'' in much the same manner as a final coalgebra.
If such a terminal object exists we call 
it an \emph{object of transducers}.
%
The intuition is that if we can meaningfully talk about its elements then they can be thought of as stochastic maps from infinite sequences of inputs to infinite sequences of outputs, subject to the causality condition described above, that each output can only depend on inputs that were received prior to it.

To illustrate this idea we prove that transducer objects always exist in \oc{Dist}, and their elements indeed have the form of stochastic maps between sequences.
For this we will need the definition of a {controlled stochastic process}.
This is a classical idea, but the category theoretic definition we give is similar to definition 9.12 of \cite{diLavore2022-coinductive}.
For further generalisations with a slightly different flavour, see section 7 of \cite{fritz2020-markov}.

\begin{definition}[controlled stochastic process]
    In a Markov category \cat{C}, we define an \emph{output-first controlled stochastic process} with input space $I$ and output space $O$ 
    as a family of morphisms $p_n\colon I^{n-1}\to O^n$ for $n \ge 1$, subject to the condition that
\begin{equation}
    \label{causality-condition-string-diagram}
    \stringdiagram{
    \sdin{inIn}{I} & \sdin{inInn}{I} & \sdin[2mm]{indots}{\vdots} & \sdin{inI1}{I} \\[2em]
    && \sdstoch{p}{p_n}{4}{4} & \\[2em]
    \sdblackdot{del1} & \sdinvis{inn} && \sdinvis{i0} \\[2em]
    & \sdout{outOnn}{O} & \sdout[2mm]{outdots}{\vdots} & \sdout{outO0}{O} \\
    }{
    \sdconnectlabel{n}{below=1mm}{inIn out1}{p in1}
    \sdconnect{inInn out1}{p in2}
    \sdconnect[1]{inI1 out1}{p in4}
    \sdconnectlabel{n}{below=1mm}{p out1}{del1 in1}
    \sdconnect{p out2}{inn in1}
    \sdconnectlabel{n-1}{below=1mm}{inn out1}{outOnn}
    \sdconnect{p out4}{i0 in1}
    \sdconnect[0]{i0 out1}{outO0}
    }
    \,\,=\,\,
    \stringdiagram{
     \sdin{inInn}{I} & \sdin[2mm]{indots}{\vdots} & \sdin{inI1}{I} \\[2em]
    & \sdstoch{p}{p_{n-1}}{3}{3} & \\[2em]
     \sdout{outOnn}{O} & \sdout[2mm]{outdots}{\vdots} & \sdout{outO0}{O} \\
    }{
    \sdconnectlabel{n-1}{below=1mm}{inInn out1}{p in1}
    \sdconnect[1]{inI1 out1}{p in3}
    \sdconnectlabel{n-1}{below=1mm}{p out1}{outOnn in1}
    \sdconnect[0]{p out3}{outO0 in1}
    },
\end{equation}
where the labels on the wires represent the indexes of the inputs and outputs.
(Note that the indices for the inputs start from 1 while the indices for the outputs start from 0, so that $p_n$ has $n-1$ inputs and $n$ outputs.
We use this convention because we consider the first output to occur ``at time 0,'' before the first input.)
An \emph{input-first controlled stochastic process} is defined similarly, but with the outputs indexed starting from 1 instead of 0, so that $p_n$ has type $I^{n-1}\to O^{n-1}$.
\end{definition}
When we say ``controlled stochastic process'' without qualification, we mean an output-first controlled stochastic process.
The condition says both that the family of distributions has to be consistent with each other, and that each output can only depend on inputs that were received prior to it.

\begin{proposition}
[\oc{Dist} has transducer objects]
\label{dist-has-all-transducers-proposition}
In \oc{Dist}, the terminal object $(\omega,T)$ of $\oc{UnifilarComb}(I,O)$ exists and is as follows.
$T$ is the set of all output-first controlled stochastic processes (in \oc{Dist}).
$\omega$ is composed of the following readout and update maps:
the readout map sends a controlled stochastic process $p$ to the distribution $p_1$, which is a distribution over $O$ with no input.
Given $i\in I$, $o\in O$ and a controlled stochastic process $p$, the update map sends $(i,o,p)$ to a delta distribution concentrated on a new controlled stochastic process $p^{i,o}$ given by
\begin{equation}
    p^{i,o}_n(o_0, \dots, o_n \mid i_1, \dots, i_n) = \frac{1}{p_1(o)}p_{n+1}(o, o_0, \dots, o_n \mid i, i_1, \dots, i_n)
\end{equation}
if $p_1(o)>0$, and to some arbitrary distribution over controlled stochastic processes otherwise.
(As such it is defined up to the appropriate generalised-almost-surely condition.)
\end{proposition}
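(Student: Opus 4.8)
The plan is to realise the unique morphism into $(\omega,T)$ by sending each state to the controlled stochastic process it generates — its \emph{behaviour} — and then to verify that this assignment is the unique comb-machine morphism. First I would check that $(\omega,T)$ genuinely lies in $\oc{UnifilarComb}(I,O)$. The readout $\omega^\blob\colon T\to O$, $p\mapsto p_1$, is stochastic and does not depend on the input, so the comb condition \cref{comb-condition} holds; the update $(i,o,p)\mapsto \delta_{p^{i,o}}$ is a deterministic function, so $\omega$ is deterministic given $O$. The one substantive check is that $p^{i,o}$, defined by the displayed conditioning formula, is again a controlled stochastic process: one confirms that dividing $p_{n+1}$ by $p_1(o)$ and prepending the pair $(i,o)$ preserves both the consistency of the family and the causality condition of \cref{causality-condition-string-diagram}. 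This is a direct computation, using that the marginal of the first output is input-independent, which is exactly why the normalising factor is $p_1(o)$; when $p_1(o)=0$ the value is arbitrary, and this is the source of the generalised-almost-surely ambiguity.

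Next, for an arbitrary unifilar comb machine $(S,\alpha)$ with readout $\alpha^\blob$ and ($\alpha^\blob$-g.a.s.\ deterministic) update map $u\colon O\otimes I\otimes S\to S$, I would define the behaviour function $!_S\colon S\to T$ by $!_S(s)=p^s$, where $p^s_n(o_0,\dots,o_{n-1}\mid i_1,\dots,i_{n-1})=\prod_{k=0}^{n-1}\alpha^\blob(o_k\mid s_k)$, the states being given by $s_0=s$ and $s_k=u(o_{k-1},i_k,s_{k-1})$ for $1\le k\le n-1$. Running the machine causally makes $p^s$ a controlled stochastic process. The heart of the argument is the identity
\[
    p^{u(o,i,s)}=(p^s)^{i,o},
\]
valid whenever $p^s_1(o)=\alpha^\blob(o\mid s)>0$; it follows by comparing the two product expansions, since conditioning $p^s$ on its first output being $o$ and prepending input $i$ peels off exactly the factor $\alpha^\blob(o\mid s)$ and shifts the starting state to $u(o,i,s)$. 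Substituting this into the defining square of a comb-machine morphism shows that $!_S$ intertwines $\alpha$ with $\omega$ up to $\alpha^\blob$-g.a.s.\ equality: the output marginals agree because $p^s_1=\alpha^\blob(\blank\mid s)$, and the updated states agree by the identity above on the positive-probability outputs, the only ones that matter. Since $!_S$ is a genuine function it is deterministic, hence a morphism of $\oc{UnifilarComb}(I,O)$.

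For uniqueness, given any morphism $g\colon(S,\alpha)\to(\omega,T)$, the morphism square forces $g(s)_1=\alpha^\blob(\blank\mid s)$ and, for each $o$ with $g(s)_1(o)>0$, the recursion $g(u(o,i,s))=(g(s))^{i,o}$. Reading this componentwise gives $g(s)_{n+1}(o,o_1,\dots\mid i,i_1,\dots)=g(s)_1(o)\,g(u(o,i,s))_n(o_1,\dots\mid i_1,\dots)$, so by induction on $n$, together with the base case, we obtain $g(s)_n=p^s_n$ for every state $s$. For outputs $o$ with $g(s)_1(o)=0$ the consistency condition of the controlled stochastic process forces $g(s)_{n+1}(o,\dots)=0$, matching $p^s$, so in fact $g(s)=p^s$ exactly and $g=\,!_S$. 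This exhibits $(\omega,T)$ as terminal.

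I expect the main obstacle to be the careful bookkeeping around the generalised-almost-surely condition: making precise that the morphism square need only hold $\alpha^\blob$-g.a.s., that $\omega$ itself is defined only up to g.a.s.\ equivalence through the freedom in $p^{i,o}$ at zero-probability outputs, and yet that uniqueness still holds \emph{exactly}, because the consistency condition pins down the components of any candidate process at precisely those zero-probability outputs. The index conventions in \cref{causality-condition-string-diagram} (outputs from $0$, inputs from $1$) also require attention when translating the string-diagrammatic causality condition into the product formula for $p^s$ and into the recursion used for uniqueness.
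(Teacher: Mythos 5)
Your proposal is correct and follows essentially the same route as the paper: both send each state $s$ to the controlled stochastic process obtained by unrolling the machine from $s$ (your product formula is the pointwise form of the paper's string-diagrammatic identity), and both pin down uniqueness by an induction that pushes a candidate morphism $h$ through the $n$-fold unrolling of $\alpha$. If anything, yours is more complete than the paper's appendix, which details only the uniqueness computation and leaves implicit the checks that $(T,\omega)$ is a well-defined unifilar comb machine and that the behaviour assignment is itself a morphism (your key identity $p^{u(o,i,s)}=(p^s)^{i,o}$, where zero-probability outputs make the morphism square hold exactly rather than merely $\alpha^\blob$-g.a.s.).
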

\begin{proof}
    Given a unifilar machine $(S,\alpha)$ and a state $s\in S$, one can show inductively that under any morphism of unifilar machines $(S,\alpha)\to(T,\omega)$, the state $s$ must map to the controlled stochastic process given by
    \begin{equation}
    \stringdiagram{
     \sdin{inInn}{I} & \sdin[2mm]{indots}{\vdots} & \sdin{inI1}{I} \\[2em]
    & \sdstoch{p}{p_{n}}{3}{3} & \\[2em]
     \sdout{outOnn}{O} & \sdout[2mm]{outdots}{\vdots} & \sdout{outO0}{O} \\
    }{
    \sdconnectlabel{n}{below=1mm}{inInn out1}{p in1}
    \sdconnect[1]{inI1 out1}{p in3}
    \sdconnectlabel{n}{below=1mm}{p out1}{outOnn in1}
    \sdconnect[0]{p out3}{outO0 in1}
    }
    \,\,=\,\,
    \stringdiagram{
        & \sdin{inIn}{I} & \sdin[2mm]{indots}{\vdots} & \sdin{inI1}{I} \\
        \sddet{s}{s}{1}{1} & \sdinvis{is0} && \sdinvis{is} \\
        \sdhalfdet[yshift=1mm]{a1}{\alpha}{2}{2} & \sdinvis{ia1} && \\
        \sdblank{hdots}{\dots}{1}{1} & \sdinvis{idots} && \\
        \sdhalfdet[yshift=1mm]{a2}{\alpha}{2}{2} &&& \sdinvis{ia2}  \\
        \sdstoch{a3}{\alpha^\blob}{1}{1} & \sdinvis{ia3} && \sdinvis{ia32} \\[1em]
        \sdout{outOn}{O} & \sdout{outOnn}{O} & \sdout[2mm]{outdots}{\vdots} & \sdout{outO0}{O} \\
    }{
    \sdconnect{s out1}{a1 in1}
    \sdconnect{a1 out1}{hdots in1}
    \sdconnect{hdots out1}{a2 in1}
    \sdconnect{a2 out1}{a3 in1}
    \sdconnect[n]{a3 out1}{outOn in1}
    \sdconnect[1]{inI1 out1}{is in1}
    \sdconnect{is out1}{a1 in2}
    \sdconnect{a1 out2}{ia2 in1}
    \sdconnect{ia2 out1}{ia32 in1}
    \sdconnect[0]{ia32 out1}{outO0}
    \sdconnect[n]{inIn out1}{is0 in1}
    \sdconnect{is0 out1}{idots in1}
    \sdconnect{idots out1}{a2 in2}
    \sdconnect{a2 out2}{ia3 in1}
    \sdconnect[n-1]{ia3 out1}{outOnn in1}
    }\,\,.
    \end{equation}
    We give the details in \cref{dist-has-all-transducers}
\end{proof}

The update map of $(T,\omega)$ performs Bayesian conditioning:
it returns a new map from input sequences to output sequences, formed by fixing the first input and conditioning on the first output.

As usual a similar result holds for Mealy machines: $\oc{UnifilarMealy}(I,O)$ has a terminal object in \oc{Dist}, whose objects can be seen as input-first controlled stochastic processes.
The proof is similar and doesn't involve the comb condition.

One advantage of formulating transducers internally in this way is that we can consider probability distributions over them.
In particular, since $(T,\omega)$ is a terminal object it is equipped with an algebra of the monad $F\comp B$ arising from the adjunction in \cref{adjunction-result}.
This means that we can think of the unique map $B(F((T,\omega)))\to (T,\omega)$ as taking a probability distribution over transducers and returning a new transducer that represents its `average' or `expected' behaviour.
This will work in any suitable Markov category, whenever a terminal object of $\oc{UnifilarComb}(I,O)$ exists.

On the other hand, in \oc{BorelStoch} the category $\oc{UnifilarMealy}(\R,\{0,1\})$ does not have a terminal object.
Consider those machines with trivial state spaces, whose output depends only on the current input.
Specifying the behaviour of such a machine amounts to specifying a measurable map $\R\to [0,1]$.
But there is no measurable space of such functions, so there is no measurable space that includes the behaviours of all such machines.
However, we conjecture that \oc{BorelStoch} has terminal objects for $\oc{UnifilarComb}(I,O)$ and $\oc{UnifilarMealy}(I,O)$ when $I$ is a countable or finite set.

\section*{Acknowledgements}

The author thanks Martin Biehl, Matteo Capucci and the anonymous reviewers for insightful comments on the manuscript, and Martin Biehl, Simon McGregor, Timorl, Matteo Capucci and Toby Smithe for discussions that stimulated the work.
This paper was made possible through the support of Grant 62229 from the John Templeton Foundation. The opinions expressed in this publication are those of the author(s) and do not necessarily reflect the views of the John Templeton Foundation.

\bibliographystyle{eptcs}
\nocite{fritz2023-representable-published}
\bibliography{refs}

\appendix

\section{Proof Details}

\subsection{\texorpdfstring{\oc{Dist}}{Dist} is strongly representable}
\label{dist-is-strongly-representable}

For a morphism $f\colon A\to B$ in \oc{Dist} we write $f(b\mid a)$ for the probability assigned to $b$ by the morphism $f$ when given $a$ as an input.
We have that for a given $a$ there only finitely many elements of $b$ for which $f(b\mid a)>0$, and we have $\sum_{b\in B} f(b\mid a) = 1$.

For a set $A$ the distribution object $PA$ is the set of all finitely supported probability distributions over $A$.
In other words it is the set of functions $A\to [0,1]$ that satisfy the properties above, i.e. functions that have a finite support and sum to 1.
The sampling map $\samp_A\colon PA\to A$ is given by $\samp_A(a\mid p) = p(a)$.

We now consider an arbitrary morphism $f\colon A\to X\otimes Y$ and a morphism $f^\diamond\colon A\to X\otimes PY$ such that $f^\diamond$ is deterministic given $X$ and such that \cref{strongly-representable-condition} holds.
We can factor $f^\diamond(x,p\mid a)$ as
\begin{equation}
    \stringdiagram{
        &\sdin{inA}{A}& \\
        &\sdhalfdet{f}{f^\diamond}{1}{2} & \\[1em]
        \sdout{outY}{PY} && \sdout{outX}{X} \\
    }{
        \sdconnect{inA out1}{f in1}
        \sdconnect{f out1}{outY in1}
        \sdconnect{f out2}{outX in1}
    }
    =
    \stringdiagram{
        \sdin{inA}{A} & \\
        \sdblackdot{cpy1} & \\
        & \sdstoch{h}{f^\blob}{1}{1} \\
        & \sdblackdot{cpy2} \\
        \sdstoch[yshift=1mm]{c}{c}{2}{2} \\
        \sdout{outZ}{PY} & \sdout{outX}{X}\\
    }{
        \sdconnect{inA out1}{c in1}
        \sdconnect{cpy1}{h in1}
        \sdconnect{h out1}{outX in1}
        \sdconnect{cpy2}{c in2}
        \sdconnect{c out1}{outZ in1}
    },
\end{equation}
or $f^\blob(x\mid a) c(p\mid x, a)$, for some conditional $c$.

For $f^\diamond$ to be deterministic given $X$ means that the conditional $c$ must be $f^\blob$-generalised-almost-surely deterministic (\cref{deterministic-given-x-condition}), which in \oc{Dist} means that $c(\blank\mid x, a)$ is a delta distribution whenever $f^\blob(x\mid a)>0$.
For this to be true we must have that that whenever $a$ and $x$ are such that $f^\blob(x\mid a)>0$ there exists a distribution $p_{x,y}\in PY$ such that
\begin{equation}
    \label{f-diamond-det-given-X-condition}
    f^\diamond(x,p\mid a) = \begin{cases}
        f^\blob(x\mid a) & \text{if $p=p_{a,x}$} \\
        0 & \text{otherwise.}
    \end{cases}
\end{equation}
In \oc{Dist}, \cref{strongly-representable-condition} (the definition of $f^\diamond$) amounts to
\begin{aligneq}
    \label{pxa-constraint}
    f(x,y\mid a) &= \sum_{p\in PY} f^\diamond(x,p \mid a) \samp_Y(y\mid p) \\
    &= \sum_{p\in PY} f^\diamond(x,p \mid a) p(y) \\
    &= f^\blob(x\mid a) p_{x,a}(y).    
\end{aligneq}

To show that $\oc{Dist}$ is strongly representable we have to show that $p_{x,a}$ is uniquely defined whenever $f^\blob(x\mid a)>0$.
But this follows immediately because we have, from \cref{pxa-constraint},
\begin{equation}
    \label{last-step-dist-is-strong-rep}
    p_{x,a}(y) = \frac{f(x,y\mid a)}{f^\blob(x\mid a)},
\end{equation}
which completes the proof.

Explicitly, the only choices for $f^\diamond$ are those of the form
\begin{equation}
    f^\diamond(x,p\mid a) = \begin{cases}
        \left.
            \begin{cases}
                f^\blob(x\mid a) &\text{if $p = \frac{f(x,\blank\mid a)}{f^\blob(x\mid a)}$} \\
                0 & \text{otherwise}
            \end{cases}
        \right\} & \text{if $f^\blob(x\mid a)>0$,} \\[1.5em]
        \text{arbitrary} & \text{otherwise.}
    \end{cases}
\end{equation}
The arbitrary values are subject to the constraint that $\sum_{x,p}f(x,p\mid a)=1$, as always.
They occur only outside the support of $f^\blob(\blank\mid a)$, which in \oc{Dist} means that all possible choices for $f^\diamond$ are $f^\blob$-g.a.s.\ equal, as required.

We note that the last step in the proof, \cref{last-step-dist-is-strong-rep}, would not be valid in example 3.23 of \cite{fritz2020-representable}, in which the probabilities are not real-valued.

\subsection{Comb machines morphisms commute with readout maps}
\label{morphisms-of-comb-machines}

We want to show that if $f\colon (S,\alpha)\to(T,\beta)$ is a morphism of comb machines, then $\alpha^\blob = f\comp\beta^\blob$.
By marginalising the definition of a morphism of comb machines and then substituing the definition of $\alpha^\blob$ and $\beta^\blob$ we have
    \begin{aligneq}
        \stringdiagram{
            \sdin{inPS}{S} & \sdin{inI}{I} \\
            \sdstoch[yshift=1mm]{a}{\alpha}{2}{2} & \\
            \sdstoch{f}{f}{1}{1} & \sdinvis{i2} \\
            \sdblackdot{outT} \\
             & \sdout{outO}{O} \\
        } {
            \sdconnect{inPS out1}{a in1}
            \sdconnect{a out1}{f in1}
            \sdconnect{f out1}{outT in1}
            \sdconnect{inI out1}{a in2}
            \sdconnect{a out2}{i2 in1}
            \sdconnect{i2 out1}{outO in1}
        }
        &=
        \stringdiagram{
            \sdin{inPS}{S} & \sdin{inI}{I} \\
            \sdstoch{f}{f}{1}{1} & \sdinvis{i2} \\
            \sdstoch[yshift=1mm]{a}{\beta}{2}{2} & \\
            \sdblackdot{outT} \\
             & \sdout{outO}{O} \\
        } {
            \sdconnect{inPS out1}{f in1}
            \sdconnect{f out1}{a in1}
            \sdconnect{a out1}{outT in1}
            \sdconnect{inI out1}{i2 in1}
            \sdconnect{i2 out1}{a in2}
            \sdconnect{a out2}{outO in1}
        }
        \\[1em]
        \stringdiagram{
            \sdin{inPS}{S} & \sdin{inI}{I} \\
            & \sdblackdot{del1} \\
            \sdstoch{a}{\alpha^\blob}{1}{1} & \\[1ex]
             & \sdout{outO}{O} \\
        } {
            \sdconnect{inPS out1}{a in1}
            \sdconnect{inI out1}{del1 in1}
            \sdconnect{a out1}{outO in1}
        }
        &=
        \stringdiagram{
            \sdin{inPS}{S} & \sdin{inI}{I} \\
            \sdstoch{f}{f}{1}{1} & \sdblackdot{i2} \\
            \sdstoch{a}{\beta^\blob}{1}{1} & \\[1ex]
             & \sdout{outO}{O} \\
        } {
            \sdconnect{inPS out1}{f in1}
            \sdconnect{f out1}{a in1}
            \sdconnect{a out1}{outO in1}
            \sdconnect{inI out1}{i2 in1}
        } 
    \end{aligneq}
and the result follows.

\subsection{Filtering on sequences}
\label{filtering-on-sequences}

Our claim is that given a dynamical model in the form of a comb machine $(H,\kappa)$, an initial distribution over $H$, a sequence of inputs (an element of $I^n$), and an observed sequence of outputs (an element of $O^n$), the Bayesian filter (i.e.\ an update map for $B((H,\kappa))$) allows us to recover the posterior distribution over $H$, given the observations.

To make this formal let us define
\begin{equation}
\stringdiagram{
            \sdin[-1mm]{inS}{PH} & \sdin{inI}{I^n} \\
            \sdstoch{u}{\kappa^n}{2}{2} & \\
            \sdout[-1mm]{outS}{H} & \sdout{outO}{O^n} \\
        }{
            \sdconnect{inS out1}{u in1}
            \sdconnect{inI out1}{u in2}
            \sdconnect{u out1}{outS in1}
            \sdconnect{u out2}{outO in1}
        }
    \coloneqq
    \stringdiagram{
        \sdin{inP}{PH} & \sdin{inIn}{I} & \sdin{inIn1}{I} & \sdin[4mm]{indots}{\vdots} & \sdin{inI1}{I} \\
        \sdwhitedot{s} && \sdinvis{is0} && \sdinvis{is} \\[1ex]
        \sdstoch[yshift=1mm]{k1}{\kappa}{2}{2} && \sdinvis{ik1} && \\[1mm]
        \sdblank{hdots}{\dots}{1}{1} && \sdinvis{idots} && \sdinvis{idots2} \\
        \sdstoch[yshift=1mm]{k2}{\kappa}{2}{2} & \sdinvis{ik2} &&&   \\[1em]
        \sdstoch[yshift=1mm]{k3}{\kappa}{2}{2} && \sdinvis{ik3} && \sdinvis{ik32} \\[1em]
        \sdout{outH}{H} & \sdout{outOn}{O} & \sdout{outOn1}{O} & \sdout[4mm]{outdots}{\vdots} & \sdout{outO1}{O} \\
    }{
    \sdconnect{inP out1}{s in1}
    \sdconnect{s out1}{k1 in1}
    \sdconnect{k1 out1}{hdots in1}
    \sdconnect{hdots out1}{k2 in1}
    \sdconnect{k2 out1}{k3 in1}
    \sdconnect{k3 out1}{outH in1}
    \sdconnect{inI1 out1}{is in1}
    \sdconnect{is out1}{k1 in2}
    \sdconnect{k1 out2}{idots2 in1}
    \sdconnect{idots2 out1}{ik32 in1}
    \sdconnect{ik32 out1}{outO1}
    \sdconnect{inIn1 out1}{idots in1}
    \sdconnect{idots out1}{k2 in2}
    \sdconnect{k2 out2}{ik3 in1}
    \sdconnect{ik3 out1}{outOn1 in1}
    \sdconnect{inIn out1}{ik2 in1}
    \sdconnect{ik2 out1}{k3 in2}
    \sdconnect{k3 out2}{outOn in1}
    }.
\end{equation}
What we seek is a conditional of $\kappa^n$, that is $c\colon O^n\otimes I^n\otimes PH \to H$ such that
\begin{equation}
    \stringdiagram{
        \sdin[-1mm]{inS}{PH} & \sdin{inI}{I^n} \\
            \sdstoch{u}{\kappa^n}{2}{2} & \\
            \sdout[-1mm]{outS}{H} & \sdout{outO}{O^n} \\
        }{
            \sdconnect{inS out1}{u in1}
            \sdconnect{inI out1}{u in2}
            \sdconnect{u out1}{outS in1}
            \sdconnect{u out2}{outO in1}
        }
        =
    \stringdiagram{
        \sdin{inH}{PH} & \sdin{inI}{I^n}   \\
        \sdblackdot{cpyH} & \sdblackdot{cpyI}  \\[1em]
        && \sdstoch{k}{\kappa^n}{2}{2} \\
        && \sdinvisn{i1}{2} \\
        && \sdinvisn{i2}{2} \\[1em]
        \sdstoch[yshift=1mm]{c}{c}{3}{2} \\
        \sdout{outH}{H} && \sdout[1mm]{outO}{O^n} \\
    }{
        \sdconnect{inH out1}{cpyH in1}
        \sdconnect{inI out1}{cpyI in1}
        \sdconnect{cpyH}{k in1}
        \sdconnect{cpyI}{k in2}
        \sdconnect{cpyH out1}{c in1}
        \sdconnect{cpyI out1}{c in2}
        \sdblackdotat{i1 out1}
        \sdconnect{k out1}{i1 out1}
        \sdconnect{k out2}{outO in1}
        \sdblackdotat[cpyO]{i2 out2}
        \sdconnect{cpyO}{c in3}
        \sdconnect{c out1}{outH}
    }.
\end{equation}
Then $c$, or rather $c^\squ\colon O^n\times I^n\times PH\to PH$, is the map that takes the observed output sequence, the input sequence and returns the prior over $H$ to the posterior over $H$.
Our claim is that such a conditional $c$ is given by composing $n$ instances of $u$ as follows,
\begin{equation}
    \stringdiagram{
        \sdin{inH}{PH} & \sdin{inI}{I^n} & \sdin{inO}{O^n} \\[1ex]
        \sdstoch[yshift=1mm]{c}{c}{3}{2} \\
        \sdout{outH}{H} \\
    }{
        \sdconnect{inH out1}{c in1}
        \sdconnect{inI out1}{c in2}
        \sdconnect{inO out1}{c in3}
        \sdconnect{c out1}{outH in1}
    }
    \coloneqq
    \stringdiagram{
        \sdin{inH}{PH} & \sdin{inI1}{I} & \sdin[4mm]{inIdots}{\vdots} & \sdin{inI2}{I} & \sdin{inO1}{O} & \sdin{inOdots}{\vdots} & \sdin{inO2}{O} &  \\[2em]
        \sdstoch[yshift=1mm]{u1}{u}{3}{2} \\
        \sdblank{hdots}{\dots}{1}{1} & \sdinvis{iI} &&& \sdinvis{iO} \\[1em]
        \sdstoch[yshift=1mm]{u2}{u}{3}{2} \\
        \sdwhitedot{samp} \\
        \sdout{outH}{H} \\
    } {
        \sdconnect{inH out1}{u1 in1}
        \sdconnect{inI2 out1}{u1 in2}
        \sdconnect{inO2 out1}{u1 in3}
        \sdconnect{u1 out1}{hdots in1}
        \sdconnect{inI1 out1}{iI in1}
        \sdconnect{inO1 out1}{iO in1}
        \sdconnect{iI out1}{u2 in2}
        \sdconnect{iO out1}{u2 in3}
        \sdconnect{hdots out1}{u2 in1}
        \sdconnect{u2 out1}{outH in1}
    }
\end{equation}
where $u$ is an update map for $B((H,\kappa))$.

The proof of this is by induction and can be expressed as a lengthy but straightforward string diagram calculation; we omit it for reasons of space.
A similar result holds in the Mealy machine case.

A proof of a similar statement was given previously by the author and colleagues in \cite{virgo2021-interpreting} (proposition 2 in appendix B.2), although with somewhat different definitions since that paper does not consider representable Markov categories.

It is worth remarking that the omitted proof uses the fact that $u$ is $(P\kappa^\blob\comp\opn{samp})$-g.a.s.\ deterministic, since this fact is not directly used in the proofs of our other results.
(The assumption is nevertheless needed, in order to apply the defining property of a strongly representable Markov category.)

\subsection{\texorpdfstring{$\opn{Bayes}_f$}{Bayes-f} does Bayes}
\label{bayes-f-does-bayes}

We want to show that the morphism $\opn{Bayes}_f$ in the expression
\begin{equation}
    B\left(
    \stringdiagram{
        \sdin{inT}{\Theta} & \\
        \sdblackdot{cpy1} & \\
        & \sdstoch{f}{f}{1}{1} \\
        \sdout{outT}{\Theta} & \sdout{outX}{X} \\
    }{
        \sdconnect{inT}{cpy1 in1}
        \sdconnect{cpy1}{f in1}
        \sdconnect{f out1}{outX in1}
        \sdconnect{cpy1 out1}{outT in1}
    }
    \right)
    =
    \stringdiagram{
        \sdin{inT}{P\Theta} & \\
        \sdblackdot{cpy1} & \\
        & \sddet{f}{Pf}{1}{1} \\
        & \sdwhitedot{samp1} \\
        & \sdblackdot{cpy2} & \\
        \sdstoch[yshift=1.1mm]{bf}{\opn{Bayes}_f}{2}{2} \\
        \sdout{outT}{P\Theta} & \sdout{outX}{X} \\
    }{
        \sdconnect{inT}{cpy1 in1}
        \sdconnect{cpy1}{f in1}
        \sdconnect{f out1}{samp1 in1}
        \sdconnect{samp1 out1}{outX in1}
        \sdconnect{cpy2}{bf in2}
        \sdconnect{cpy1 out1}{bf in1}
        \sdconnect{bf out1}{outT in1}
    }
\end{equation}
can be seen as performing a Bayesian update.

For this we recall the definition of a \emph{Bayesian inverse} from \cite{cho2017-disintegration}, which we generalise slightly by allowing the prior to depend on a parameter.
In a Markov category $\cat{C}$, given a morphism $f\colon\Theta\to X$ and a morphism $p\colon \Phi\to \Theta$ called the prior, we say that $f^\dagger_p\colon X\to \Theta$ is a \emph{Bayesian inverse of $f\colon \Theta\to X$ with respect to $p$} if
\begin{equation}
    \label{bayesian-inverse-definition}
    \stringdiagram{
        \sdin{inT}{\Phi} &\\
        \sdstoch{p}{p}{1}{1} &\\
        \sdblackdot{cpy} &\\
        & \sdstoch{f}{f}{1}{1} &\\
        \sdout{outT}{\Theta} & \sdout{outX}{X} \\
    }{
        \sdconnect{inT out1}{p in1}
        \sdconnect{p out1}{outT in1}
        \sdconnect{cpy}{f in1}
        \sdconnect{f out1}{outX in1}
    }
    =
    \stringdiagram{
        \sdin{inT}{\Phi} & \\
        \sdblackdot{cpy1} & \\
        & \sdstoch{p}{p}{1}{1} \\
        & \sdstoch{f}{f}{1}{1} \\
        & \sdblackdot{cpy2} & \\
        \sdstoch[yshift=1.3mm]{fd}{f^\dagger_p}{2}{2} & \\
        \sdout{outT}{\Theta} & \sdout{outX}{X} \\
    }{
        \sdconnect{inT}{cpy1 in1}
        \sdconnect{cpy1}{p in1}
        \sdconnect{p out1}{f in1}
        \sdconnect{f out1}{outX in1}
        \sdconnect{cpy2}{fd in2}
        \sdconnect{cpy1 out1}{fd in1}
        \sdconnect{fd out1}{outT in1}
    }.
\end{equation}
If \cat{C} has conditionals then such Bayesian inverse exists for any $f$ and $p$.
It is not necessarily unique, but like any conditional it is unique up to $(p\comp f)$-g.a.s.\ equivalence.

If \cat{C} is representable we can take $\Phi = P\Theta$ and $p = \samp_\Theta$, yielding
\begin{aligneq}
    \label{universal-bayesian-inverse-definition}
    \stringdiagram{
        \sdin{inT}{P\Theta} &\\
        \sdwhitedot{p} &\\
        \sdblackdot{cpy} &\\
        & \sdstoch{f}{f}{1}{1} &\\
        \sdout{outT}{\Theta} & \sdout{outX}{X} \\
    }{
        \sdconnect{inT out1}{p in1}
        \sdconnect{p out1}{outT in1}
        \sdconnect{cpy}{f in1}
        \sdconnect{f out1}{outX in1}
    }
    &=
    \stringdiagram{
        \sdin{inT}{P\Theta} & \\
        \sdblackdot{cpy1} & \\
        & \sdwhitedot{p} \\
        & \sdstoch{f}{f}{1}{1} \\
        & \sdblackdot{cpy2} & \\
        \sdstoch[yshift=1.35mm]{fd}{{f^\dagger_{\samp}}}{2}{2} & \\
        \sdout{outT}{\Theta} & \sdout{outX}{X} \\
    }{
        \sdconnect{inT}{cpy1 in1}
        \sdconnect{cpy1}{p in1}
        \sdconnect{p out1}{f in1}
        \sdconnect{f out1}{outX in1}
        \sdconnect{cpy2}{fd in2}
        \sdconnect{cpy1 out1}{fd in1}
        \sdconnect{fd out1}{outT in1}
    } \\[1em]
    &=
    \stringdiagram{
        \sdin{inT}{P\Theta} & \\
        \sdblackdot{cpy1} & \\
        & \sddet{f}{Pf}{1}{1} \\
        & \sdwhitedot{s} \\
        & \sdblackdot{cpy2} & \\
        \sdstoch[yshift=1.35mm]{fd}{{f^\dagger_{\samp}}}{2}{2} & \\
        \sdout{outT}{\Theta} & \sdout{outX}{X} \\
    }{
        \sdconnect{inT}{cpy1 in1}
        \sdconnect{cpy1}{f in1}
        \sdconnect{f out1}{s in1}
        \sdconnect{s out1}{outX in1}
        \sdconnect{cpy2}{fd in2}
        \sdconnect{cpy1 out1}{fd in1}
        \sdconnect{fd out1}{outT in1}
    }.
\end{aligneq}
The morphism $f^\dagger_{\samp}$ can be thought of as performing a Bayesian inversion of $f$ with respect to \emph{any} prior, since it takes an element of $P\Theta$ as an input.

If $\cat{C}$ is strongly representable, then from the definition of $\opn{Bayes}_f$ we have
\begin{equation}
    \stringdiagram{
        \sdin{inT}{P\Theta} &\\
        \sdwhitedot{p} &\\
        \sdblackdot{cpy} &\\
        & \sdstoch{f}{f}{1}{1} &\\
        \sdout{outT}{\Theta} & \sdout{outX}{X} \\
    }{
        \sdconnect{inT out1}{p in1}
        \sdconnect{p out1}{outT in1}
        \sdconnect{cpy}{f in1}
        \sdconnect{f out1}{outX in1}
    }
    =
    \stringdiagram{
        \sdin{inT}{P\Theta} & \\
        \sdblackdot{cpy1} & \\
        & \sddet{f}{Pf}{1}{1} \\
        & \sdwhitedot{samp1} \\
        & \sdblackdot{cpy2} & \\
        \sdstoch[yshift=1.1mm]{bf}{\opn{Bayes}_f}{2}{2} \\
        \sdwhitedot{samp} \\
        \sdout{outT}{\Theta} & \sdout{outX}{X} \\
    }{
        \sdconnect{inT}{cpy1 in1}
        \sdconnect{cpy1}{f in1}
        \sdconnect{f out1}{samp1 in1}
        \sdconnect{samp1 out1}{outX in1}
        \sdconnect{cpy2}{bf in2}
        \sdconnect{cpy1 out1}{bf in1}
        \sdconnect{bf out1}{outT in1}
    }.
\end{equation}
We conclude that up to $Pf\comp\samp$-g.a.s.\ equivalence, $\opn{Bayes}_f$ is the same as $(f^\dagger_{\opn{samp}})^\squ$, the deterministic version of $f^\dagger_{\samp}$.
It takes as input a prior in $P\Theta$ and a data point from $X$, and gives as output the Bayesian posterior as an element of $P\Theta$.

\subsection{\texorpdfstring{\oc{Dist}}{Dist} has transducers}
\label{dist-has-all-transducers}

We write $\omega^\blob\colon T\to O$ for the readout map of the unifilar machine $(T,\omega)$ defined in \cref{dist-has-all-transducers-proposition}.
We want to show that this unifilar machine is the terminal object of $\oc{UnifilarComb}(I,O)$.

We note that the existence of the terminal object could be proven in a different way, by noting that comb machines in Dist can be formulated as coalgebras of a polynomial functor on $\oc{Set}$.
The existence of a terminal object (a.k.a.\ final coalgebra) is then a standard result.
However, the proof below leads more directly to the probabilistic interpretation in terms of controlled stochastic processes.

As mentioned in the main text we will show  that given a unifilar machine $(S,\alpha)$ and a state $s\in S$, under any morphism of unifilar machines $(S,\alpha)\to(T,\omega)$, the state $s$ must map to the controlled stochastic process given by
    \begin{equation}
    \stringdiagram{
     \sdin{inInn}{I} & \sdin[2mm]{indots}{\vdots} & \sdin{inI1}{I} \\[2em]
    & \sdstoch{p}{p_{n}}{3}{3} & \\[2em]
     \sdout{outOnn}{O} & \sdout[2mm]{outdots}{\vdots} & \sdout{outO0}{O} \\
    }{
    \sdconnectlabel{n}{below=1mm}{inInn out1}{p in1}
    \sdconnect[1]{inI1 out1}{p in3}
    \sdconnectlabel{n}{below=1mm}{p out1}{outOnn in1}
    \sdconnect[0]{p out3}{outO0 in1}
    }
    \,\,=\,\,
    \stringdiagram{
        & \sdin{inIn}{I} & \sdin[2mm]{indots}{\vdots} & \sdin{inI1}{I} \\
        \sddet{s}{s}{1}{1} & \sdinvis{is0} && \sdinvis{is} \\
        \sdhalfdet[yshift=1mm]{a1}{\alpha}{2}{2} & \sdinvis{ia1} && \\
        \sdblank{hdots}{\dots}{1}{1} & \sdinvis{idots} && \\
        \sdhalfdet[yshift=1mm]{a2}{\alpha}{2}{2} &&& \sdinvis{ia2}  \\
        \sdstoch{a3}{\alpha^\blob}{1}{1} & \sdinvis{ia3} && \sdinvis{ia32} \\[1em]
        \sdout{outOn}{O} & \sdout{outOnn}{O} & \sdout[2mm]{outdots}{\vdots} & \sdout{outO0}{O} \\
    }{
    \sdconnect{s out1}{a1 in1}
    \sdconnect{a1 out1}{hdots in1}
    \sdconnect{hdots out1}{a2 in1}
    \sdconnect{a2 out1}{a3 in1}
    \sdconnect[n]{a3 out1}{outOn in1}
    \sdconnect[1]{inI1 out1}{is in1}
    \sdconnect{is out1}{a1 in2}
    \sdconnect{a1 out2}{ia2 in1}
    \sdconnect{ia2 out1}{ia32 in1}
    \sdconnect[0]{ia32 out1}{outO0}
    \sdconnect[n]{inIn out1}{is0 in1}
    \sdconnect{is0 out1}{idots in1}
    \sdconnect{idots out1}{a2 in2}
    \sdconnect{a2 out2}{ia3 in1}
    \sdconnect[n-1]{ia3 out1}{outOnn in1}
    }\,\,.
    \end{equation}
%
%
%
%
%
It is straightforward to show inductively that for $p\in T$ we have
\begin{equation}
    \label{transducer-identity}
    \stringdiagram{
     \sdin{inInn}{I} & \sdin[2mm]{indots}{\vdots} & \sdin{inI1}{I} \\[2em]
    & \sdstoch{p}{p_{n}}{3}{3} & \\[2em]
     \sdout{outOnn}{O} & \sdout[2mm]{outdots}{\vdots} & \sdout{outO0}{O} \\
    }{
    \sdconnectlabel{n}{below=1mm}{inInn out1}{p in1}
    \sdconnect[1]{inI1 out1}{p in3}
    \sdconnectlabel{n}{below=1mm}{p out1}{outOnn in1}
    \sdconnect[0]{p out3}{outO0 in1}
    }
    \,\,=\,\,
    \stringdiagram{
        & \sdin{inIn}{I} & \sdin[2mm]{indots}{\vdots} & \sdin{inI1}{I} \\
        \sddet{s}{p}{1}{1} & \sdinvis{is0} && \sdinvis{is} \\
        \sdhalfdet[yshift=1mm]{a1}{\omega}{2}{2} & \sdinvis{ia1} && \\
        \sdblank{hdots}{\dots}{1}{1} & \sdinvis{idots} && \\
        \sdhalfdet[yshift=1mm]{a2}{\omega}{2}{2} &&& \sdinvis{ia2}  \\
        \sdstoch{a3}{\omega^\blob}{1}{1} & \sdinvis{ia3} && \sdinvis{ia32} \\[1em]
        \sdout{outOn}{O} & \sdout{outOnn}{O} & \sdout[2mm]{outdots}{\vdots} & \sdout{outO0}{O} \\
    }{
    \sdconnect{s out1}{a1 in1}
    \sdconnect{a1 out1}{hdots in1}
    \sdconnect{hdots out1}{a2 in1}
    \sdconnect{a2 out1}{a3 in1}
    \sdconnect[n]{a3 out1}{outOn in1}
    \sdconnect[1]{inI1 out1}{is in1}
    \sdconnect{is out1}{a1 in2}
    \sdconnect{a1 out2}{ia2 in1}
    \sdconnect{ia2 out1}{ia32 in1}
    \sdconnect[0]{ia32 out1}{outO0}
    \sdconnect[n]{inIn out1}{is0 in1}
    \sdconnect{is0 out1}{idots in1}
    \sdconnect{idots out1}{a2 in2}
    \sdconnect{a2 out2}{ia3 in1}
    \sdconnect[n-1]{ia3 out1}{outOnn in1}
    }\,\,,
\end{equation}
where $\omega^\blob$ is the readout map of $(T,\omega)$. 

Now suppose we are given a unifilar machine $(S, \alpha)$, and write $\alpha^\blob\colon S\to O$ for its readout map.
Consider a map of unifilar machines $h\colon (S,\alpha)\to (T,\omega)$.
Our goal is to show that such a map always exists and is uniquely defined.
Let $s\in S$ be state of $\alpha$ and let $p=h(s)$ be the transducer that it maps to under $h$.
We then calculate

\begin{multline}
\qquad\qquad
    \stringdiagram{
     \sdin{inInn}{I} & \sdin[2mm]{indots}{\vdots} & \sdin{inI1}{I} \\[2em]
    & \sdstoch{p}{p_{n}}{3}{3} & \\[2em]
     \sdout{outOnn}{O} & \sdout[2mm]{outdots}{\vdots} & \sdout{outO0}{O} \\
    }{
    \sdconnectlabel{n}{below=1mm}{inInn out1}{p in1}
    \sdconnect[1]{inI1 out1}{p in3}
    \sdconnectlabel{n}{below=1mm}{p out1}{outOnn in1}
    \sdconnect[0]{p out3}{outO0 in1}
    }
    \,\, = \,\,
    \stringdiagram{
        & \sdin{inIn}{I} & \sdin[2mm]{indots}{\vdots} & \sdin{inI1}{I} \\
        \sddet{s}{s}{1}{1} & \sdinvis{is0} && \sdinvis{is} \\
        \sddet{h}{h}{1}{1} & && \\
        \sdhalfdet[yshift=1mm]{a1}{\omega}{2}{2} & \sdinvis{ia1} && \\
        \sdblank{hdots}{\dots}{1}{1} & \sdinvis{idots} && \\
        \sdhalfdet[yshift=1mm]{a2}{\omega}{2}{2} &&& \sdinvis{ia2}  \\
        \sdstoch{a3}{\omega^\blob}{1}{1} & \sdinvis{ia3} && \sdinvis{ia32} \\[1em]
        \sdout{outOn}{O} & \sdout{outOnn}{O} & \sdout[2mm]{outdots}{\vdots} & \sdout{outO0}{O} \\
    }{
    \sdconnect{s out1}{a1 in1}
    \sdconnect{a1 out1}{hdots in1}
    \sdconnect{hdots out1}{a2 in1}
    \sdconnect{a2 out1}{a3 in1}
    \sdconnect[n]{a3 out1}{outOn in1}
    \sdconnect[1]{inI1 out1}{is in1}
    \sdconnect{is out1}{a1 in2}
    \sdconnect{a1 out2}{ia2 in1}
    \sdconnect{ia2 out1}{ia32 in1}
    \sdconnect[0]{ia32 out1}{outO0}
    \sdconnect[n]{inIn out1}{is0 in1}
    \sdconnect{is0 out1}{idots in1}
    \sdconnect{idots out1}{a2 in2}
    \sdconnect{a2 out2}{ia3 in1}
    \sdconnect[n-1]{ia3 out1}{outOnn in1}
    }
    \\[1em]
    =\,\,
    \stringdiagram{
        & \sdin{inIn}{I} & \sdin[2mm]{indots}{\vdots} & \sdin{inI1}{I} \\
        \sddet{s}{s}{1}{1} & \sdinvis{is0} && \sdinvis{is} \\
        \sdhalfdet[yshift=1mm]{a1}{\alpha}{2}{2} & \sdinvis{ia1} && \\
        \sdblank{hdots}{\dots}{1}{1} & \sdinvis{idots} && \\
        \sdhalfdet[yshift=1mm]{a2}{\alpha}{2}{2} &&& \sdinvis{ia2}  \\
        \sddet{h}{h}{1}{1} & \sdinvis{ih} &&& \\
        \sdstoch{a3}{\omega^\blob}{1}{1} & \sdinvis{ia3} && \sdinvis{ia32} \\[1em]
        \sdout{outOn}{O} & \sdout{outOnn}{O} & \sdout[2mm]{outdots}{\vdots} & \sdout{outO0}{O} \\
    }{
    \sdconnect{s out1}{a1 in1}
    \sdconnect{a1 out1}{hdots in1}
    \sdconnect{hdots out1}{a2 in1}
    \sdconnect{a2 out1}{a3 in1}
    \sdconnect[n]{a3 out1}{outOn in1}
    \sdconnect[1]{inI1 out1}{is in1}
    \sdconnect{is out1}{a1 in2}
    \sdconnect{a1 out2}{ia2 in1}
    \sdconnect{ia2 out1}{ia32 in1}
    \sdconnect[0]{ia32 out1}{outO0}
    \sdconnect[n]{inIn out1}{is0 in1}
    \sdconnect{is0 out1}{idots in1}
    \sdconnect{idots out1}{a2 in2}
    \sdconnect{a2 out2}{ih in1}
    \sdconnect{ih out1}{ia3 in1}
    \sdconnect[n-1]{ia3 out1}{outOnn in1}
    }
    \,\,=\,\,
    \stringdiagram{
        & \sdin{inIn}{I} & \sdin[2mm]{indots}{\vdots} & \sdin{inI1}{I} \\
        \sddet{s}{s}{1}{1} & \sdinvis{is0} && \sdinvis{is} \\
        \sdhalfdet[yshift=1mm]{a1}{\alpha}{2}{2} & \sdinvis{ia1} && \\
        \sdblank{hdots}{\dots}{1}{1} & \sdinvis{idots} && \\
        \sdhalfdet[yshift=1mm]{a2}{\alpha}{2}{2} &&& \sdinvis{ia2}  \\
        \sdstoch{a3}{\alpha^\blob}{1}{1} & \sdinvis{ia3} && \sdinvis{ia32} \\[1em]
        \sdout{outOn}{O} & \sdout{outOnn}{O} & \sdout[2mm]{outdots}{\vdots} & \sdout{outO0}{O} \\
    }{
    \sdconnect{s out1}{a1 in1}
    \sdconnect{a1 out1}{hdots in1}
    \sdconnect{hdots out1}{a2 in1}
    \sdconnect{a2 out1}{a3 in1}
    \sdconnect[n]{a3 out1}{outOn in1}
    \sdconnect[1]{inI1 out1}{is in1}
    \sdconnect{is out1}{a1 in2}
    \sdconnect{a1 out2}{ia2 in1}
    \sdconnect{ia2 out1}{ia32 in1}
    \sdconnect[0]{ia32 out1}{outO0}
    \sdconnect[n]{inIn out1}{is0 in1}
    \sdconnect{is0 out1}{idots in1}
    \sdconnect{idots out1}{a2 in2}
    \sdconnect{a2 out2}{ia3 in1}
    \sdconnect[n-1]{ia3 out1}{outOnn in1}
    }\,\,.
\end{multline}
The second equality is by induction, moving $h$ to the right across the chain of $n$ morphisms.

This leaves us with exactly one choice for $p=h(s)$ for each $s\in S$, and we conclude that the map $h\colon (S,\alpha)\to (T,\omega)$ is unique.

\end{document}